\newtheorem{theorem}{Theorem}[section]
\newtheorem{proposition}[theorem]{Proposition}
\newtheorem{lemma}[theorem]{Lemma}
\newtheorem{corollary}[theorem]{Corollary}
\theoremstyle{definition}
\newtheorem{definition}[theorem]{Definition}
\theoremstyle{remark}
\newcommand{\R}{\mathbb{R}}
\newcommand{\Z}{\mathbb{Z}}
\newcommand{\Fix}{\textnormal{Fix}}
\newcommand{\Id}{\textnormal{Id}}
\newcommand{\B}{{\cal B}}
\begin{document}

\begin{center}
{\large{\bf 
Construction of heteroclinic networks in $\R^4$}}\\
\mbox{} \\
\begin{tabular}{cc}
{\bf Sofia B.\ S.\ D.\ Castro$^{\dagger}$} & {\bf Alexander Lohse$^{\ddagger,*}$} \\
{\small sdcastro@fep.up.pt} & {\small alexander.lohse@math.uni-hamburg.de}
\end{tabular}

\end{center}

\noindent $^{*}$ Corresponding author.

\noindent $^{\dagger}$ Faculdade de Economia and Centro de Matem\'atica, Universidade do Porto, Rua Dr.\ Roberto Frias, 4200-464 Porto, Portugal.

\noindent $^{\ddagger}$ Centro de Matem\'atica da Universidade do Porto, Rua do Campo Alegre 687, 4169-007 Porto, Portugal\footnote{Long-term address: Fachbereich Mathematik, Universit\"at Hamburg, Bundesstra{\ss}e 55, 20146 Hamburg, Germany}

\begin{abstract}
We study heteroclinic networks in $\R^4$, made of a certain type of simple robust heteroclinic cycle. In simple cycles all the connections are of saddle-sink type in two-dimensional fixed-point spaces. We show that there exist only very few ways to join such cycles together in a network and provide the list of all possible such networks in $\R^4$.
The networks involving simple heteroclinic cycles of type A are new in the literature and we describe the stability of the cycles in these networks: while the geometry of type A and type B networks is very similar, stability distinguishes them clearly.
\end{abstract}

\noindent {\em Keywords:} heteroclinic network, heteroclinic cycle, stability

\vspace{.3cm}

\noindent {\em AMS classification:} 34C37, 37C80, 37C75
\vspace{2cm}

\section{Introduction}

The study of robust heteroclinic cycles and networks is well-established as an interesting subject in the scientific community. It was approached initially from the point of view of the existence of such heteroclinic objects\footnote{We point out that the purpose of the references in this section is to illustrate rather than to exhaust the existing bibliography on the subject.} as in Guckenheimer and Holmes \cite{GH} and dos Reis \cite{Reis}. Then, the study of the asymptotic stability of cycles led to results in Krupa and Melbourne \cite{KrupaMelbourne95a,KrupaMelbourne2004}. When addressing the stability of cycles in networks, it is clear that no individual cycle can be asymptotically stable and intermediate notions of stability appeared in Melbourne \cite{Melbourne1991}, Brannath \cite{Brannath}, Kirk and Silber \cite{KS}, Driesse and Homburg \cite{DriesseHomburg_a} and Podvigina and Ashwin \cite{PodviginaAshwin2011}. Of these {\em essential asymptotic stability} (e.a.s.)\footnote{In \cite{PodviginaAshwin2011} this is called {\em predominant asymptotic stability}.} is the strongest and the one we use.

In another spirit, an interest was taken in the dynamics near networks of various degrees of complexity, see e.g.\ Homburg and Knobloch \cite{HomburgKnobloch} or Postlethwaite and Dawes \cite{PostlethwaiteDawes}. Clearly, this must be done on a case-by-case basis although an efficient tool seems to be the use of symmetry to study a quotient network instead of the original one. Examples may be found in Aguiar {\em et al.} \cite{ACL2005}, Aguiar and Castro \cite{AguiarCastro}, and Castro {\em et al.} \cite{CLP}. 

The manifest interest in, and complexity of, the dynamics observed near networks provides a good reason for a systematic treatment of such objects, particularly in low dimensions where the analysis is tractable. 
At this point, we must distinguish between homoclinic and heteroclinic cycles: the former exhibit connections of a node to itself (or to another node in the same group orbit). Homoclinic cycles were systematically addressed by Sottocornola \cite{Sottocornola} and Podvigina \cite{Podvigina2013} (see also Homburg {\em et al.\ }\cite{HomburgKellnerGharaei}), while networks involving homoclinic cycles appear in the results of Driesse and Homburg \cite{DriesseHomburg} and Podvigina and Chossat \cite{PC2015}. We focus on a particular type of heteroclinic cycle, in the context of symmetry, made of what we call elementary building blocks, see section \ref{preliminaries}. These cycles are such that the equilibria lie on the coordinate axes which are fixed-point spaces, the connections occur in coordinate planes (also fixed-point spaces) and all the eigenvalues are real. See Figures \ref{B2B2} and \ref{B3B3} for an illustration.

A systematic study of heteroclinic networks in low dimension is useful in that it provides simple examples of interesting dynamics and a gallery of case studies. The study of bifurcations from heteroclinic networks, for instance, can greatly benefit from such a gallery. See Kirk {\em et al.} \cite{KPR} for an interesting example of such a phenomenon.
A first step towards such a systematic approach may be found in Castro and Lohse \cite{CastroLohse}. The construction of heteroclinic networks in the context of coupled cell systems has recently been addressed by Ashwin and Postlethwaite \cite{AP} and by Field \cite{Field2015}, with a different purpose. Given a heteroclinic network these authors find ways in which a coupled cell system may be constructed so that its dynamics exhibit the prescribed network. The present article provides a list of certain simple networks in $\R^4$, made of simple cycles in the sense of \cite{KrupaMelbourne2004}, satisfying Assumption A at the beginning of section \ref{construction}. Whether and how these can be realized in the context of coupled cell systems is the object of \cite{AP} and \cite{Field2015}. The simplex and cylinder methods of \cite[Section 2]{AP} are of particular interest since they ensure the networks have most of the desired properties.

This article's contribution consists of a complete list of a special type of heteroclinic network in dimension $4$, together with the study of the stability properties of each individual cycle. In particular, heteroclinic networks consisting of cycles of type $A$, so far absent from the literature, are considered. Although the geometry of networks of type A is similar to that of networks of type B, the stability properties of cycles are very different.

Stability of individual cycles for type A networks is very constrained. In fact, for the particular networks we consider, all cycles except at most one attract almost no points in their neighbourhood. 
For an e.a.s.\ network of type A consisting of two cycles, while one of the cycles attracts virtually nothing in its neighbourhood, the other cycle attracts initial conditions near either cycle in the network. When the e.a.s.\ network is made of three cycles, two of these attract almost nothing in their respective neighbourhoods while the third cycle attracts initial conditions near any connection in the network. Which cycle has the attracting properties depends on the linearization at the nodes of the network. This is reminiscent of the type B network in the set-up studied by Kirk and Silber \cite{KS} but very different from many cases addressed in \cite{CastroLohse}.

From the point of view of applications the interest in heteroclinic cycles and networks manifests itself in subjects such as neuroscience, geophysics, game theory and populations dynamics. Illustrations in the literature appear in the work of Chossat and Krupa \cite{ChossatKrupa2014}, Rodrigues \cite{Rodrigues}, Aguiar and Castro \cite{AguiarCastro} and Hofbauer and Sigmund \cite{HofbauerSigmund}, respectively.

By restricting to simple networks satisfying Assumption A in $\R^4$ we exclude networks of potential interest but beyond the scope of this paper. These include those where nodes have complex eigenvalues, see for instance Rodrigues and Labouriau \cite{RL2014}. Allowing for non-simple (again with respect to the definition in \cite{KrupaMelbourne2004}) cycles there are networks with more than one equilibrium on the same axis and on the same side of the origin. These can be obtained through the cylinder method of \cite{AP}. Another type of non-simple cycle occurs when the invariant space containing a connection is not a fixed-point space. Such examples appear for instance in replicator dynamics, see \cite{HofbauerSigmund}. In this case the existence of invariant subspaces is intrinsic to the dynamics rather than a consequence of symmetry. We can also obtain non-simple cycles by allowing equilibria outside the coordinate axes. Dynamics near such a network can be found in Kirk et al.\ \cite{Kirketal2010}.

The next section provides the necessary definitions and results for the remaining ones. Section \ref{construction} establishes simple results which lead to a complete list of a certain type of heteroclinic networks in $\R^4$. Examples of some of these are known in the literature. We provide the construction of the new possibilities in an appendix.
In Section \ref{stability}, we resort to the calculations of Podvigina and Ashwin \cite{PodviginaAshwin2011} to establish the stability indices for each cycle in the network. Joining two or more cycles in a network places restrictions on the possible values given in \cite{PodviginaAshwin2011}. Section \ref{conclusion} concludes.

\section{Preliminary results} \label{preliminaries}

Our concern is with vector fields in $\R^4$ described by a set of differential equations $\dot{y}=f(y)$, where $f$ is $\Gamma$-equivariant for some finite group $\Gamma \subset O(4)$, that is,
$$
f(\gamma .y)=\gamma. f(y), \; \; \; \forall \; \gamma \in \Gamma \; \; \forall \; y \in \R^4.
$$
A {\em heteroclinic cycle} consists of equilibria, also called nodes, $\xi_i$, $i=1, \hdots, m$, together with trajectories which connect them:
$$
[\xi_i \rightarrow \xi_{i+1}] \subset W^u(\xi_i) \cap W^s(\xi_{i+1}) \neq \emptyset \;\;\;\; (i=1,\hdots,m; \; \; \xi_{m+1}=\xi_1).
$$
If $\xi_i \in \Gamma\xi_1$ for all $i=2, \ldots ,m$, then we say the cycle is \emph{homoclinic}. When referring to a cycle as heteroclinic we always assume it is not homoclinic. Such a cycle connects at least two nodes in different group orbits. It is clear that group orbits of equilibria in a cycle always appear in the group orbit of the whole cycle. See \cite[subsection 2.5]{Podvigina2012} for an interesting discussion on the definition of a heteroclinic cycle.

Using the notion of a building block from \cite{PC2015} we describe the cycles we study as follows. A \emph{building block}, see \cite[p.\ 901]{PC2015}, is a sequence of equilibria and connections $[\xi_1 \to \ldots \to \xi_{m+1}]$ where $\xi_{m+1}=\gamma \xi_1$ for some $\gamma \in \Gamma$ and such that $\xi_i \notin \Gamma \xi_1$ for all $i=2, \ldots ,m$. We say a building block is \emph{elementary} if no two of the nodes $\xi_1, \ldots ,\xi_m$ belong to the same group orbit and $\gamma = \Id$.

A finite connected union of heteroclinic cycles is called a \emph{heteroclinic network}.

We assume that each connection $[\xi_i \rightarrow \xi_{i+1}]$ is of saddle-sink type in an invariant subspace in order to ensure robustness of the cycle.

Heteroclinic cycles are classified as {\em simple} if the connections between consecutive equilibria are contained in a two-dimensional subspace. We use the definition of Krupa and Melbourne \cite[p.\ 1181]{KrupaMelbourne2004}:
let $\Sigma_j \subset \Gamma$ be an isotropy subgroup and let $P_j=\textnormal{Fix}(\Sigma_j)$. Assume that for all $j=1,\hdots , m$ the connection $[\xi_j \rightarrow \xi_{j+1}]$ is a saddle-sink connection in $P_j$. Write $L_j=P_{j-1}\cap P_j$. A robust heteroclinic cycle $X \subset \R^4\backslash \{ 0\}$ is {\em simple} if 
\begin{enumerate}
	\item[(i)] $\dim P_j=2$ for each $j$;
	\item[(ii)] X intersects each connected component of $L_j\backslash \{ 0\}$ in at most one point.
\end{enumerate}

In most of the literature it seems to have been silently assumed that for simple cycles the linearization $\mathrm{d}f(\xi_j)$ has no double eigenvalues. 
We adopt this further assumption in referring to simple cycles and define simple networks accordingly.

\begin{definition}
A heteroclinic network is called {\em simple} if
\begin{enumerate}
	\item[(i)] it is made of simple cycles,
	\item[(ii)] it intersects each connected component of $L_j\backslash \{ 0\}$ in at most one point.
\end{enumerate}
\end{definition}

Simple cycles have been classified into types $A$, $B$, and $C$, both in the context of bifurcation of cycles (see Chossat {\em et al.\ }\cite{CKMS}) and in the context of their stability (see Krupa and Melbourne \cite{KrupaMelbourne2004} and Podvigina and Ashwin \cite{PodviginaAshwin2011}). A further type, $Z$, appears in Podvigina \cite{Podvigina2012} to include some cycles of types $B$ and $C$. We use the original classification into types $A$, $B$ and $C$, which we reproduce here from  \cite{KrupaMelbourne2004}.

\begin{definition}[Definition 3.2 in Krupa and Melbourne \cite{KrupaMelbourne2004}] 
Let $X \subset \R^4$ be a simple robust heteroclinic cycle.
\begin{enumerate}
	\item[(i)]  $X$ is of {\em type A} if $\Sigma_j \cong \Z_2$ for all $j$.
	\item[(ii)]  $X$ is of {\em type B} if there is a fixed-point subspace $Q$ with $\dim Q=3$, such that $X \subset Q$.
	\item[(iii)]  $X$ is of {\em type C} if it is neither of type $A$ nor of type $B$.
\end{enumerate}
\end{definition}
We use subscripts to indicate the number of distinct group orbits of equilibria in a cycle, and superscripts to denote whether $-\Id$ is an element of $\Gamma$ ($-$) or not ($+$). For example, a $B_3^-$ cycle has three (group orbits of) equilibria and $-\Id \in \Gamma$. It follows from the definition (see also \cite[Corollary 3.5]{KrupaMelbourne2004}) that the vector fields supporting cycles of type $A$ are equivariant under symmetry groups that do not possess a reflection, whereas those for cycles of types $B$ or $C$ do possess a reflection. In this context, reflection means reflection in a hyperplane.

The asymptotic stability of the cycles, dependent on the eigenvalues of the vector field at each equilibrium, has been studied by Krupa and Melbourne \cite{KrupaMelbourne95a,KrupaMelbourne2004}. Joining cycles in a network prevents each cycle from being asymptotically stable, calling for intermediate notions of stability. These have been introduced by Melbourne \cite{Melbourne1991}, Brannath \cite{Brannath}, Kirk and Silber \cite{KS}. More recently, Podvigina and Ashwin \cite{PodviginaAshwin2011} and Podvigina \cite{Podvigina2012} have revisited these notions.

As in Podvigina and Ashwin \cite{PodviginaAshwin2011}, we denote by $B_{\varepsilon}(X)$ an $\varepsilon$-neighbourhood of a (compact, invariant) set $X \subset \R^n$. We write $\B(X)$ for the basin of attraction of $X$, i.e.\ the set of points $x \in \R^n$ with $\omega(x) \subset X$. For $\delta>0$ the $\delta$-local basin of attraction is $\B_\delta(X):=\{x \in \B(X) \mid \phi_t(x) \in B_\delta(X) \: \forall t>0  \}$, where $\phi_t(.)$ is the flow generated by the system of equations. 

The following is the strongest intermediate notion of stability. We denote Lebesgue measure by $\ell(.)$.

\begin{definition}[Definition 1.2 in Brannath \cite{Brannath}]
A compact invariant set $X$ is called {\em essentially asymptotically stable (e.a.s.)} if it is asymptotically stable relative to a set $N \subset \R^n$ with the property that
\begin{align*}
\lim\limits_{\varepsilon \to 0} \frac{\ell(B_{\varepsilon}(X) \cap N)}{\ell(B_\varepsilon(X))} = 1.
\end{align*}
\end{definition}

In \cite{PodviginaAshwin2011} the concept of stability index is introduced. It provides a means of quantifying the attractiveness of a compact, invariant set $X$. See Definition 5 and section 2.3 in Podvigina and Ashwin \cite{PodviginaAshwin2011}.

\begin{definition}
For $x \in X$ and $\varepsilon, \delta >0$ define 
\begin{align*}
\Sigma_\varepsilon(x)&:=\frac{\ell(B_\varepsilon(x) \cap \B(X))}{\ell(B_\varepsilon(x))}, \qquad \Sigma_{\varepsilon,\delta}(x):=\frac{\ell(B_\varepsilon(x) \cap \B_\delta(X))}{\ell(B_\varepsilon(x))}.
\intertext{Then the {\em stability index} at $x$ with respect to $X$ is defined to be}
\sigma(x)&:=\sigma_+(x)-\sigma_-(x),
\intertext{where}
\sigma_-(x)&:= \lim\limits_{\varepsilon \to 0} \left[ \frac{\textnormal{ln}(\Sigma_\varepsilon(x) )}{\textnormal{ln}(\varepsilon)}  \right], \qquad \sigma_+(x):= \lim\limits_{\varepsilon \to 0} \left[ \frac{\textnormal{ln}(1-\Sigma_\varepsilon(x) )}{\textnormal{ln}(\varepsilon)}  \right].
\intertext{The convention that $\sigma_-(x)=\infty$ if $\Sigma_\varepsilon(x)=0$ for some $\varepsilon>0$ and $\sigma_+(x)=\infty$ if $\Sigma_\varepsilon(x)=1$ is introduced. Therefore, $\sigma(x) \in [-\infty, \infty]$. In the same way the {\em local stability index} at $x \in X$ is defined to be}
\sigma_{\textnormal{loc}}(x)&:=\sigma_{\textnormal{loc},+}(x)-\sigma_{\textnormal{loc},-}(x),
\intertext{with}
\sigma_{\textnormal{loc},-}(x):= \lim\limits_{\delta \to 0} &\lim\limits_{\varepsilon \to 0} \left[ \frac{\textnormal{ln}(\Sigma_{\varepsilon,\delta}(x))}{\textnormal{ln}(\varepsilon)}  \right], \: \sigma_{\textnormal{loc},+}(x):= \lim\limits_{\delta \to 0} \lim\limits_{\varepsilon \to 0} \left[ \frac{\textnormal{ln}(1-\Sigma_{\varepsilon,\delta}(x))}{\textnormal{ln}(\varepsilon)}  \right].
\end{align*}
\end{definition}

The stability index $\sigma(x)$ quantifies the local extent (at $x \in X$) of the basin of attraction of $X$. If $\sigma(x)>0$, then in a small neighbourhood of $x$ an increasingly large portion of points is attracted to $X$. If on the other hand $\sigma(x)<0$, then the portion of such points goes to zero as the neighbourhood shrinks.

Theorem 2.2 in \cite{PodviginaAshwin2011} establishes that both $\sigma(x)$ and $\sigma_{\textnormal{loc}}(x)$ are constant along trajectories. In order to characterize the attraction properties of a heteroclinic cycle in terms of the stability index we have to calculate only a finite number of indices. Podvigina and Ashwin \cite{PodviginaAshwin2011} denote the index along the trajectory leading to an equilibrium $\xi_j$ by $\sigma_j$. In heteroclinic networks there may be more than one such connection, which is why in such cases we differ from this notation by writing $\sigma_{ij}$ for the index along the trajectory from $\xi_i$ to $\xi_j$. Moreover, Theorem 2.4 in \cite{PodviginaAshwin2011} shows that the calculation of the indices can be simplified by restricting to a transverse section.

Local stability indices are related to essential asymptotic stability in the following way.

\begin{theorem}[Theorem 3.1 \cite{Lohse2015}] \label{local_index}
Let $X \subset \R^n$ be a heteroclinic cycle or network with finitely many equilibria and connecting trajectories. Suppose that the local stability index $\sigma_{\textnormal{loc}}(x)$ exists and is not equal to zero for all $x \in X$. Then $X$ is essentially asymptotically stable if and only if $\sigma_{\textnormal{loc}}(x)>0$ along all connecting trajectories.
\end{theorem}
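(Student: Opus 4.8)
The plan is to recast essential asymptotic stability as a density statement about the local basin $\B_\delta(X)$ and then to read that statement off from the signs of the local indices.

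\textbf{Step 1 (reduction to the local basin).} First I would show that a witnessing set $N$ in the definition of e.a.s.\ may be taken to be a local basin of attraction. If $X$ is asymptotically stable relative to $N$, then relative Lyapunov stability forces every point of $N$ close enough to $X$ to converge to $X$ while remaining in $B_\delta(X)$ for some fixed small $\delta$, so that $N \cap B_\varepsilon(X) \subset \B_\delta(X)$ once $\varepsilon$ is small; conversely $X$ is asymptotically stable relative to $\B_\delta(X)$ itself, since points there stay in $B_\delta(X)$ by construction. Hence $X$ is e.a.s.\ if and only if there is a $\delta>0$ with
\begin{align*}
\lim_{\varepsilon \to 0} \frac{\ell\big(B_\varepsilon(X) \cap \B_\delta(X)\big)}{\ell\big(B_\varepsilon(X)\big)} = 1.
\end{align*}

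\textbf{Step 2 (localisation).} Next I would relate this global ratio to the pointwise densities $\Sigma_{\varepsilon,\delta}(x)$. Away from the finitely many equilibria, foliate a tubular neighbourhood of $X$ by cross-sections transverse to the flow; by Theorem 2.4 in \cite{PodviginaAshwin2011} the fraction of each cross-section lying in $\B_\delta(X)$ is governed by $\Sigma_{\varepsilon,\delta}(x)$ at its base point $x$. Writing the numerator and denominator above as integrals of the respective cross-sectional measures along $X$ and applying Fubini, the global ratio becomes a weighted average of the $\Sigma_{\varepsilon,\delta}(x)$ and is therefore squeezed between their infimum and supremum over $x \in X$. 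By Theorem 2.2 in \cite{PodviginaAshwin2011} the index is constant along each connecting trajectory, and there are only finitely many trajectories, so these extrema are controlled by a finite set of representatives, giving the uniformity needed to pass to the limit. The pinching of the tube at the equilibria contributes only lower-order measure and may be absorbed.

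\textbf{Step 3 (sign dichotomy and conclusion).} The nonvanishing hypothesis makes the index trichotomy collapse to a clean dichotomy: $\sigma_{\textnormal{loc}}(x)>0$ corresponds to $\Sigma_{\varepsilon,\delta}(x)\to 1$, $\sigma_{\textnormal{loc}}(x)<0$ to $\Sigma_{\varepsilon,\delta}(x)\to 0$, and the excluded value $\sigma_{\textnormal{loc}}(x)=0$ to $\Sigma_{\varepsilon,\delta}(x)$ tending to a constant in $(0,1)$, so positivity along all trajectories is equivalent to $\Sigma_{\varepsilon,\delta}(x)\to 1$ everywhere. For $\Leftarrow$, this makes every cross-sectional density tend to $1$, so the squeeze of Step 2 gives full global density and hence e.a.s. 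For $\Rightarrow$ I would argue by contraposition: if positivity fails along some trajectory then, by nonvanishing, $\sigma_{\textnormal{loc}}(x)<0$ there and $\Sigma_{\varepsilon,\delta}(x)\to 0$, so the corresponding cross-sectional densities stay bounded away from $1$; integrating forces the global ratio below $1$, no local basin can have full density, and $X$ fails to be e.a.s.

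The main obstacle I anticipate is the interface between the two limits together with the reduction of Step 1. The local index is defined through the nested limit $\lim_{\delta\to 0}\lim_{\varepsilon\to 0}$, whereas e.a.s.\ requires a single fixed small $\delta$ with full $\varepsilon$-density; reconciling these means proving that the sign of the nested-limit index already fixes the qualitative behaviour of $\Sigma_{\varepsilon,\delta}(x)$ for each fixed sufficiently small $\delta$. This is exactly where the hypothesis $\sigma_{\textnormal{loc}}(x)\neq 0$ does the work, ruling out the borderline regime in which $\Sigma_{\varepsilon,\delta}(x)$ approaches an intermediate constant and the density argument would be inconclusive. Making Step 1 fully rigorous — that every witnessing $N$ is, up to a null set, trapped in a local basin — is the other delicate point, since it requires turning relative asymptotic stability into the trapping condition $\phi_t(x)\in B_\delta(X)$ for all $t>0$.
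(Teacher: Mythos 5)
A preliminary remark: the paper you were given does not actually prove this statement --- it imports it verbatim from \cite{Lohse2015} (Theorem 3.1 there), so your proposal can only be compared with that proof. Its broad strategy is indeed the one you outline: a bridge between Brannath's witnessing set $N$ and the local basins $\B_\delta(X)$, an integration of the pointwise densities $\Sigma_{\varepsilon,\delta}(x)$ over the finitely many connecting trajectories, and the sign dichotomy, with monotonicity of $\Sigma_{\varepsilon,\delta}$ in $\delta$ converting the nested limit into statements for each fixed small $\delta$. However, there is a genuine gap in the converse half of your Step 1. You assert that $X$ is asymptotically stable relative to $N=\B_\delta(X)$ ``since points there stay in $B_\delta(X)$ by construction.'' That is a non sequitur: relative asymptotic stability in Brannath's sense \cite{Brannath} demands that for \emph{every} neighbourhood $U$ of $X$ there is a neighbourhood $V$ such that forward orbits of points in $V\cap N$ remain in $U$. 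Membership in $\B_\delta(X)$ only caps excursions at the fixed size $\delta$; it does not prevent points of $\B_\delta(X)$ arbitrarily close to $X$ from making excursions of size comparable to $\delta$, which would violate relative stability for $U=B_{\delta/2}(X)$ and leave the density statement of Step 1 strictly weaker than e.a.s. Excluding this scenario is a dynamical fact, obtained from the transition/return maps near the hyperbolic equilibria (excursion size of local-basin points tends to $0$ with the initial distance to $X$); it cannot be extracted from the definition of $\B_\delta(X)$ alone. Note also that the half of Step 1 you single out as delicate is in fact the easy one: applying relative asymptotic stability with $U=B_\delta(X)$ immediately traps $N\cap B_\varepsilon(X)$ inside $\B_\delta(X)$.

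A second, smaller soft spot is the uniformity claim in Step 2. Constancy of the index along a trajectory (Theorem 2.2 of \cite{PodviginaAshwin2011}) is a statement about limits, not about rates, so ``finitely many representatives'' does not bound $\sup_x\bigl(1-\Sigma_{\varepsilon,\delta}(x)\bigr)$ or $\sup_x\Sigma_{\varepsilon,\delta}(x)$ along a whole connection: the rate of convergence can degrade as $x$ approaches the equilibria at its ends. Moreover this problematic region is \emph{not} of lower order, contrary to your ``pinching'' remark: only the balls $B_\varepsilon(\xi_i)$ have measure $O(\varepsilon^n)$, whereas the part of the tube $B_\varepsilon(X)$ lying over the points of a connection within distance $r$ of an equilibrium has measure of the same order $\varepsilon^{n-1}$ as the whole tube, merely with a small coefficient. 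The standard repair is to excise these end pieces first (choosing $r$ small, then $\varepsilon$), and to get uniform rates on the remaining compact sub-arcs from finite-time flow distortion estimates. With these two repairs --- the dynamical verification of relative stability of the local basin, and the cut-off near the equilibria --- your outline can be completed into a correct proof along the lines of \cite{Lohse2015}.
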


Stability indices are always considered with respect to a set $X$. In the context of heteroclinic networks, this may be a single cycle or the entire network. We distinguish between these indices by referring to them as {\em c-} and {\em n-indices}, respectively.

\section{Construction of simple networks}\label{construction}
From now on we are concerned with simple networks in $\R^4$ satisfying the following
\paragraph{Assumption A:}
\begin{itemize}
 \item[(i)] All the cycles in the heteroclinic network have at least one elementary building block.
 \item[(ii)] All the equilibria in the heteroclinic network are on a coordinate axis.
\end{itemize}

Assumption A(i) means that symmetric images of connections and equilibria occur only as images of an entire cycle (or network). Assumption A(ii) is naturally satisfied when the simplex and cylinder methods are used to generate simple cycles. Cycles of types $B$ and $C$ follow directly from these methods, cycles of type $A$ can be obtained by adding some symmetry breaking terms.

In \cite[Proposition 3.1]{CastroLohse}, the authors have established some results concerning simple heteroclinic networks in $\R^4$ involving cycles of types $B$ and $C$. The results concern networks with at least one common connecting trajectory. We extend this study to address networks of type $A$. We also prove that having a common connecting trajectory is compulsory for networks satisfying Assumption A. The present section provides all the necessary results leading to this complete list. Note that Lemmas \ref{sym_group} to \ref{max_trajectory_per_node} do not require Assumption A.

The next lemma completes Proposition 3.1 in \cite{CastroLohse} which deals with networks made of only two cycles of types $B$ and $C$. For the sake of completion, we include the results of \cite{CastroLohse} in the statement of Lemma \ref{sym_group} which therefore informs on all three types of network.

\begin{lemma}\label{sym_group}
Cycles of type $A$ can only be part of a network involving other cycles of the same type. Cycles of types $B$ and $C$ can be part of networks involving other cycles of the same type and equal number of equilibria or by joining cycles of types $B_3^-$ and $C_4^-$.
\end{lemma}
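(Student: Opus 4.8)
The plan is to read off the first assertion from the reflection dichotomy recalled just before the statement: by \cite[Corollary 3.5]{KrupaMelbourne2004}, a vector field $f$ supporting a cycle of type $A$ must be equivariant under a group $\Gamma$ that contains no reflection in a hyperplane, whereas a vector field supporting a cycle of type $B$ or $C$ must be equivariant under a $\Gamma$ that does contain such a reflection. The decisive observation is that a heteroclinic network is a single object for one fixed system $\dot y = f(y)$: all of its cycles lie in the same phase space $\R^4$, and their isotropy subgroups $\Sigma_j$ are subgroups of one and the same equivariance group $\Gamma$.

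First I would prove the type $A$ statement. Suppose a network contains a cycle of type $A$; then $\Gamma$ contains no reflection. Were the same network to contain a cycle of type $B$ or $C$, the very same $\Gamma$ would have to contain a reflection, which is impossible. Hence every cycle of the network is of type $A$, which is exactly the first claim; symmetrically, a network already containing a cycle of type $B$ or $C$ forces a reflection into $\Gamma$ and can therefore contain no cycle of type $A$. This argument uses neither Assumption A nor the connectedness of the network, only that a single $\Gamma$ serves all cycles. It is also consistent with the superscript notation, since $-\Id$ has determinant $+1$ in $\R^4$ and is not a reflection, so the presence or absence of $-\Id$ is independent of the type $A$ versus type $B/C$ split.

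For the second assertion both cycles are of type $B$ or $C$, so $\Gamma$ does contain a reflection, and what remains is to decide which such cycles can share one $\Gamma$ in $\R^4$. For a network of two cycles this is precisely \cite[Proposition 3.1]{CastroLohse}: the two cycles must have the same type and the same number of group orbits of equilibria, the only mixed-type possibility being a $B_3^-$ cycle joined with a $C_4^-$ cycle. For a network of more than two such cycles I would argue that the constraint propagates along the connected network: any two cycles sharing part of their structure are governed by the two-cycle analysis, and since all of them are tied to the same $\Gamma$ (with the same status of $-\Id$), no combination beyond those in \cite[Proposition 3.1]{CastroLohse} can occur.

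The main obstacle is therefore not the type $A$ statement, which the dichotomy settles in one line, but the classification underlying the second assertion. One must verify, from the finite list of three-dimensional fixed-point subspaces $Q$ and the admissible $\Z_2$-isotropy data available in $\R^4$ together with the presence or absence of $-\Id$, that $B_3^-$ with $C_4^-$ is the unique mixed-type pairing compatible with a single $\Gamma$, and that it is genuinely realizable. This finite but delicate case analysis is exactly what Proposition 3.1 of \cite{CastroLohse} carries out, so I would invoke it rather than repeat the enumeration, confining the new work to the reflection argument for type $A$.
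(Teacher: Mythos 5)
Your proof of the first assertion coincides with the paper's: all cycles of a network belong to one vector field with one equivariance group $\Gamma$, and the reflection dichotomy (type $A$ forbids reflections, types $B$ and $C$ require one) immediately excludes mixing type $A$ with types $B$ or $C$. That part is correct and needs no change.

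The second assertion is where your argument has a genuine gap. You reduce it to \cite[Proposition 3.1]{CastroLohse}, but that proposition classifies networks of \emph{two} cycles under hypotheses that are not available here: it assumes the two cycles share a common connecting trajectory and that there are no critical elements outside the network. Lemma \ref{sym_group} is stated without Assumption A, so the common-connection property (Corollary \ref{connecting_trajectories}) cannot be invoked -- it is proved later and only under Assumption A. Moreover, for a network with three or more cycles, any pair of its cycles has the remaining cycles as critical elements \emph{outside} that pair's union, so Proposition 3.1 cannot be applied to sub-networks as a black box; your ``propagation along the network'' step also presupposes that any two cycles of incompatible type would have to share structure directly, which nothing at this stage guarantees. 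The paper closes exactly this hole with a purely group-theoretic argument: Krupa and Melbourne \cite[section 3.2]{KrupaMelbourne2004} list the symmetry groups of the vector fields supporting each simple cycle type ($\Z_2^3$ for $B_2^+$, $\Z_2^4$ for $B_3^-$ and for $C_4^-$, $\Z_2 \ltimes \Z_2^4$ for $C_2^-$). Since all cycles in a network share the same $\Gamma$, only cycles whose required groups coincide can coexist, which yields ``same type and same number of equilibria'' plus the single exceptional pairing of $B_3^-$ with $C_4^-$ -- for any number of cycles and with no assumption on how they intersect. Your instinct that one $\Gamma$ serves all cycles is the right lever; to make it bite you need the Krupa--Melbourne table of symmetry groups, not the two-cycle network classification.
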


\begin{proof}
According to the definition of the type of the cycles, the symmetry group of vector fields supporting cycles of type $A$ does not possess any reflections whereas that of vector fields supporting cycles of types $B$ and $C$ does. Therefore, cycles of type $A$ are compatible only with cycles of the same type. 

Concerning cycles of types $B$ and $C$, Krupa and Melbourne \cite[section 3.2]{KrupaMelbourne2004} have provided a description of the symmetry groups of supporting vector fields. These are as follows
\begin{center}
\begin{tabular}{|c|c|c|c|c|}
\hline
symmetry group & $\Z_2^3$ & $\Z_2^4$ & $\Z_2 \ltimes \Z_2^4$ & $\Z_2^4$ \\
\hline
cycle of type & $B_2^+$ & $B_3^-$ & $C_2^-$ & $C_4^-$ \\
\hline
\end{tabular}
\end{center}
concluding the proof.
\end{proof}

We now derive a few essential results that further limit the possibilities for heteroclinic networks in $\R^4$. These are then put together to obtain a complete list of networks in Theorem \ref{construct}.

\begin{lemma}\label{2nodes_lemma}
Consider a simple cycle with two nodes. These nodes belong to the same one-dimensional vector space.
\end{lemma}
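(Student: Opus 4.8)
The plan is to place both nodes in the intersection $L := P_1 \cap P_2$ of the two connection planes and then use the two conditions in the definition of a simple cycle to pin down $\dim L = 1$.

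First I would fix notation. Write the two nodes as $\xi_1, \xi_2$, so the cycle is $\xi_1 \to \xi_2 \to \xi_1$, with the connection $[\xi_1 \to \xi_2]$ contained in the two-dimensional fixed-point space $P_1$ and $[\xi_2 \to \xi_1]$ contained in $P_2$. Since each $P_j$ is a closed linear subspace containing a saddle-sink connection, it contains both endpoints of that connection. Reading off the endpoints of the two connections gives $\xi_1, \xi_2 \in P_1$ and $\xi_1, \xi_2 \in P_2$, so both nodes lie in $L = P_1 \cap P_2$. For $m=2$ the sets $L_1 = P_2 \cap P_1$ and $L_2 = P_1 \cap P_2$ both coincide with $L$, so condition (ii) of the definition of a simple cycle is precisely a statement about $L$.

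Next I would determine $\dim L$. As an intersection of two-dimensional subspaces of $\R^4$, we have $\dim L \in \{0,1,2\}$. The case $\dim L = 0$ is impossible, since $L = \{0\}$ would force $\xi_1 = \xi_2 = 0$, contradicting both that the two nodes are distinct and that $X \subset \R^4 \setminus \{0\}$. The case $\dim L = 2$ is where condition (ii) does the work: here $L = P_1 = P_2$, so $L \setminus \{0\}$ is a punctured plane and hence connected, yet $\xi_1$ and $\xi_2$ are two distinct points of $X$ lying in this single connected component, contradicting condition (ii). This leaves $\dim L = 1$.

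Finally, with $\dim L = 1$ the space $L$ is a line through the origin containing both $\xi_1$ and $\xi_2$, which is exactly the asserted one-dimensional vector space (condition (ii) in fact forces the two nodes onto the two distinct rays of $L \setminus \{0\}$, but this refinement is not needed here). The only genuine obstacle is the elimination of the case $\dim L = 2$, and the key observation there is purely topological, namely that a two-dimensional subspace with the origin removed remains connected and can therefore accommodate at most one node under condition (ii). Note that the argument uses only the definition of a simple cycle and not Assumption A, consistent with the remark preceding the lemma.
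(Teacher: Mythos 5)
Your proof is correct and takes essentially the same route as the paper: both nodes lie in $P_1 \cap P_2 = L_1$, and this intersection is a one-dimensional vector space. Your write-up is in fact slightly more complete, since the paper simply asserts that $L_1$ is one-dimensional, whereas you justify this by ruling out $\dim(P_1 \cap P_2) = 0$ (using $X \subset \R^4 \setminus \{0\}$) and $\dim(P_1 \cap P_2) = 2$ (using the connectedness of a punctured plane together with condition (ii) of the definition of a simple cycle).
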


\begin{proof}
Let $\xi_1$ and $\xi_2$ be the nodes of a simple cycle $[\xi_1 \rightarrow \xi_2 \rightarrow \xi_1]$. Then $[\xi_1 \rightarrow \xi_2] \subset P_2$ and $[\xi_2 \rightarrow \xi_1] \subset P_1$, where $P_i$, $i=1,2$ is a two-dimensional fixed-point space. Also, $\xi_1, \xi_2 \in P_1 \cap P_2 = L_1$ which is a one-dimensional vector space.
\end{proof}

\begin{lemma}\label{max_trajectory_per_node}
At each node of a simple network in $\R^4$ there exist at most three connecting trajectories (up to symmetry).
\end{lemma}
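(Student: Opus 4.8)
The plan is to bound the number of connections at a node by a dimension count on the eigendirections of the linearization $\mathrm{d}f(\xi)$, and then to use symmetry to collapse the two branches living inside each connection plane into a single trajectory up to symmetry.

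First I would fix a node $\xi$ of the simple network. Since $\xi$ is an equilibrium lying in a fixed-point space, the line $L=\R\xi$ through the origin and $\xi$ is a one-dimensional fixed-point space; fixed-point spaces are linear and invariant under $\mathrm{d}f(\xi)$, so $L$ is an eigenline carrying the radial eigenvalue. Because the cycles are simple and we assume $\mathrm{d}f(\xi)$ has no repeated eigenvalues, the linearization has four simple real eigenvalues with one-dimensional eigenspaces, giving a splitting $\R^4=L\oplus V_1\oplus V_2\oplus V_3$ with $V_1,V_2,V_3$ the three transverse eigenlines.

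Next I would show that every connecting trajectory at $\xi$ lies in one of at most three two-dimensional fixed-point planes. Any such trajectory belongs to some simple cycle, hence is contained in a two-dimensional fixed-point space $P$. As $P$ is linear and contains $\xi$, it contains $L=\R\xi$; and $P$ is $\mathrm{d}f(\xi)$-invariant and contains the eigenline $L$, so with simple eigenvalues its invariant complement within $P$ is an eigenline and $P=L\oplus V_k$ for exactly one $k\in\{1,2,3\}$. Thus at most three fixed-point planes pass through $\xi$, one per transverse eigendirection. It then remains to see that each plane $P=L\oplus V_k$ carries at most one connection up to symmetry. Within $P$ the transverse eigenvalue has a fixed sign, so $\xi$ is either a saddle (its one-dimensional unstable manifold in $P$ consisting of the two rays along $\pm V_k$, yielding outgoing connections) or a sink in $P$ (incoming connections approaching along $V_k$); in either case the two rays link $\xi$ to a node $\eta$ and its reflected image, or only to $\eta$ when no symmetric node exists. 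When the symmetric node is present it is related to $\eta$ by an element of $\Gamma$, so the two branches coincide up to symmetry, and otherwise only one branch is a genuine connection. Hence each plane contributes at most one trajectory up to symmetry, and at most three arise at $\xi$.

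I expect the last step to be the main obstacle. One must verify carefully that the two branches of the (un)stable manifold inside a connection plane are genuinely identified — either by an element of the symmetry group (as happens for types $B$ and $C$, where a reflection negating $V_k$ and fixing $L$ is available) or because the second branch does not meet another node of the network (as one must check for type $A$, where no such hyperplane reflection exists). A subsidiary point to confirm is that no connecting trajectory at $\xi$ escapes this picture by lying in a plane not containing $L$ or by requiring a three-dimensional invariant subspace, which is exactly what the simplicity of the cycles and the linearity of fixed-point spaces rule out.
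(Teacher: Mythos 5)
Your proof is correct and takes essentially the same route as the paper, whose entire argument is the eigenvalue count you formalize: four eigenvalues at each node, one radial, leaving three transverse eigendirections, each of which spans with the radial line one potential connection plane. The obstacle you flag in your final paragraph --- identifying the two branches inside a connection plane up to symmetry --- is not addressed in the paper's proof of this lemma either, but it is settled by Proposition 3.1 of \cite{KrupaMelbourne2004} (which the paper invokes in the proof of Lemma \ref{max_nodes}, case (c)): for a simple cycle with connection in $P=\Fix(\Sigma)$ one always has $\Z_2 \subset N(\Sigma)/\Sigma$ acting on $P$ and exchanging the two branches, and this holds for type $A$ cycles as well, so no separate argument is needed there.
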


\begin{proof}
At each node of such a network there are four eigenvalues. Given that one of them is radial, there are three possible connecting directions corresponding to the remaining three eigenvalues.
\end{proof}

\begin{lemma}\label{max_nodes}
Up to symmetry, a simple network in $\R^4$ satisfying Assumption A does not involve more than four equilibria.
\end{lemma}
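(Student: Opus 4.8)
The plan is to use Assumption~A(ii) to place every equilibrium on one of the four coordinate axes of $\R^4$, and then to bound the number of \emph{group orbits} of equilibria by four. First I would fix the local picture at a node $\xi$ lying on, say, the $x_1$-axis: because the coordinate $2$-planes through this axis are fixed-point spaces, they are invariant under $\mathrm{d}f(\xi)$, so (using that the eigenvalues are real and distinct) the eigendirections of $\mathrm{d}f(\xi)$ are the coordinate axes and every connection at $\xi$ runs inside a coordinate $2$-plane. Thus by A(ii) and simplicity each plane $P_j$ is a coordinate plane, and since $\dim P_j=2$ it is the fixed-point space of an order-two $\sigma_j\in\Gamma$ acting as $-\Id$ on the complementary coordinate plane $P_j^{\perp}$ and trivially on $P_j$. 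Applying condition~(ii) of the definition of a simple network to an axis, realised as some $L_j=P_{j-1}\cap P_j$, shows that each of the two open rays of an axis contains at most one equilibrium, so each axis carries at most two equilibria as points. It then remains to control the identifications made by $\Gamma$.

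Next I would split into two cases according to the sign-reversing part of $\Gamma$. Call an axis \emph{free} if some $\gamma\in\Gamma$ satisfies $\gamma(e_i)=-e_i$, so that $\gamma$ sends the equilibrium on the positive ray to the one on the negative ray. If every axis is free, then the (at most two) equilibria on each axis lie in a single $\Gamma$-orbit, each axis contributes at most one orbit, and the total is at most four. This case contains in particular all networks with $-\Id\in\Gamma$, since $-\Id$ reverses every coordinate at once; it is the easy half.

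The remaining case --- and the step I expect to be the obstacle --- is when some axis, say the $x_1$-axis, is \emph{not} free, i.e.\ when no $\gamma\in\Gamma$ satisfies $\gamma(e_1)=-e_1$; only here can the two rays carry equilibria in distinct orbits. The key point is that every coordinate $2$-plane which is a fixed-point space must then contain the $x_1$-axis: such a plane is $\Fix(\sigma)$ for an involution $\sigma\in\Gamma$ acting as $-\Id$ on $\Fix(\sigma)^{\perp}$, and were $x_1\notin\Fix(\sigma)$ we would have $e_1\in\Fix(\sigma)^{\perp}$ and $\sigma(e_1)=-e_1$, making the $x_1$-axis free, contrary to assumption. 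Consequently the only fixed-point plane through any other axis $x_m$ is $x_1x_m$, a single plane. But a node is flanked by its incoming and outgoing connections, which lie in two \emph{distinct} coordinate $2$-planes (distinct because $\dim L_j=1$ forces $P_{j-1}\neq P_j$), both of them fixed-point spaces through $x_m$; with only one such plane available, no node can sit off the $x_1$-axis. Hence every equilibrium lies in the $\Gamma$-orbit of the $x_1$-axis, and the surviving configuration is a cycle with both nodes on a single axis, exactly as in Lemma~\ref{2nodes_lemma}, giving at most two orbits. Finally I would check that permutation symmetries (as in the $C_2^-$ group of Lemma~\ref{sym_group}) cannot enlarge the count: a permutation element either reverses an axis, producing a further identification of its two rays, or carries one axis onto another, merging their orbits. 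In every case the number of orbits of equilibria in the network is at most four.
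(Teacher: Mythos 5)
Your Case 2 is a genuinely different and rather elegant idea, but your Case 1 --- which you present as the easy half --- contains the real gap, and it is exactly where the substance of the lemma lies. You assert that an axis-reversing element $\gamma$ (with $\gamma(e_i)=-e_i$) ``sends the equilibrium on the positive ray to the one on the negative ray.'' What equivariance actually gives is that $\gamma$ sends the network node $\xi^{+}$ on the positive ray to \emph{some} equilibrium $\gamma\xi^{+}$ of the vector field on the negative ray. Nothing forces $\gamma\xi^{+}$ to be a point of the network: the simplicity condition (ii) restricts only points \emph{of the network} on each ray, and the paper explicitly allows equilibria outside the network (it remarks that Assumption A is strictly weaker than the ``no critical elements outside the network'' hypothesis of \cite{CastroLohse}). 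So $\gamma\xi^{+}$ need not coincide with the network node $\xi^{-}$ on that ray. The obstruction is concrete: $\Gamma\subset O(4)$, so every symmetry preserves the distance to the origin; if $|\xi^{+}|\neq|\xi^{-}|$ (as happens for the two nodes of an $A_2^+$ or $B_2^+$ cycle, which lie on one axis at generically different radii), then $\xi^{+}$ and $\xi^{-}$ lie in different $\Gamma$-orbits no matter how many axis-reversing elements $\Gamma$ contains. Consequently your Case 1 does not yield ``at most one orbit per axis''; a priori each of the four axes could carry two mutually non-symmetric network nodes, i.e.\ eight orbits, and your argument produces no contradiction. Freeness of an axis is simply not aligned with the dichotomy the lemma needs (equilibria on opposite rays symmetry-related or not).

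This is precisely the point at which the paper's proof does its work, by a route you do not take: it assumes two nodes $\xi_i,\xi_i'$ on one axis are \emph{unrelated} by symmetry and analyses their \emph{connections}. A direct connection $[\xi_i\to\xi_i']$ must cross the transverse axis of the plane containing it, so that axis cannot be invariant and carries no nodes (their case (a)), which is what keeps the total at four; connections from $\xi_i$ and $\xi_i'$ into a common node, or into the same plane, force a symmetry relation via the group theory of \cite[Proposition 3.1]{KrupaMelbourne2004} (their cases (b),(c)), contradicting the assumption. So the bound comes from the connection structure, not from the existence of axis-reversing symmetries. Your Case 2, for what it is worth, is essentially sound in this setting: the involution $\sigma$ with $\sigma|_{P}=\Id$ and $\sigma|_{P^{\perp}}=-\Id$ does exist for each connection plane $P$, though this needs an argument you omit (the isotropy subgroup $\Sigma$ with $\Fix(\Sigma)=P$ commutes with $\mathrm{d}f(\xi)$ at a node $\xi\in P$, whose restriction to $P^{\perp}$ has distinct real eigenvalues, forcing $\Sigma|_{P^{\perp}}$ to be diagonal and hence to contain $-\Id|_{P^{\perp}}$); a $\Z_3$ rotation acting on $P^{\perp}$ shows the claim is false without the eigenvalue hypothesis. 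But repairing Case 1 would require importing an analysis of connections of the paper's type, so the proof as it stands is incomplete.
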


\begin{proof}
 Each connected component of $L_i\backslash \{ 0\}$ contains at most one equilibrium in the network because the network is simple. So suppose for some $i$ there are two equilibria $\xi_i, \xi_i^{'} \in L_i$ that are unrelated by symmetry. Then we have the following cases:
\begin{enumerate}
	\item[(a)] There is a connection $[\xi_i \to \xi_i^{'}]$ within a plane $P_{ij}$. Then $L_j$ is not an invariant subspace, so it cannot contain equilibria and the overall number of nodes does not exceed four.
	\item[(b)] There are connections $[\xi_i \rightarrow \xi_j]$ and $[\xi^{'}_i \rightarrow \xi_j]$ within $P_{ij}$. Then there exists $\gamma \in \Gamma$ such that $\xi_i^{'}=\gamma \xi_i$, because at $\xi_j$ the stable, unstable and transverse spaces are orthogonal complements.
	\item[(c)] There are connections $[\xi_i \rightarrow \xi_j]$ and $[\xi^{'}_i \rightarrow \xi_j']$ within $P_{ij}$. Then $\xi_j$ and $\xi^{'}_j$ are in different connected components of $L_j\backslash \{0\}$. It follows from \cite[Proposition 3.1]{KrupaMelbourne2004} that, if $P_{ij} = \Fix(\Sigma )$, then $\Z_2 \subset N(\Sigma)/\Sigma$ and $\xi^{'}_i \in \Gamma \xi_i$, contradicting our assumption.
	\item[(d)] There are connections $[\xi_i \rightarrow \xi_j]$ and $[\xi^{'}_i \rightarrow \xi_k]$ within planes $P_{ij}$ and $P_{ik}$. Then the respective other connections (i.e.\ those leading to $\xi_i$ and $\xi_i^{'}$) fall into one of the cases above.
\end{enumerate}
Note that for none of the cases the directions of the connections matter.
\end{proof}

\begin{corollary}\label{max_trajectory_network}
In $\R^4$, the maximum number of connecting trajectories in a simple network satisfying assumption A is six (up to symmetry).
\end{corollary}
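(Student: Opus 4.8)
The plan is a double-counting (handshaking) argument that simply combines the two bounds already in hand. By Lemma~\ref{max_nodes}, up to symmetry the network has at most four equilibria, say representatives $\xi_1,\dots,\xi_4$ of the node orbits. By Lemma~\ref{max_trajectory_per_node}, at each $\xi_j$ there are at most three connecting trajectories (up to symmetry): one of the four eigenvalues of $\mathrm{d}f(\xi_j)$ is radial, so only the remaining three directions can carry connections.

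The key observation is that every connecting trajectory has exactly two ends --- an $\alpha$-limit equilibrium (its source) and an $\omega$-limit equilibrium (its sink) --- and is therefore incident to exactly two node orbits, contributing to the per-node count at each of them. Summing the per-node trajectory counts over the at most four node orbits thus counts every trajectory exactly twice. Writing $T$ for the number of connecting trajectories up to symmetry, this gives $2T \le 4 \cdot 3 = 12$, hence $T \le 6$.

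The only point that needs care is the exactness of the factor of two in the double count. Under Assumption~A each cycle contains an elementary building block, so within a building block the nodes lie on pairwise distinct group orbits with $\gamma=\Id$; consequently no connection joins a node orbit to itself, and every trajectory is genuinely counted at two different representatives $\xi_j$. (Even a hypothetical self-connection would be counted twice at a single node and so would not violate the bound, so the estimate is robust in any case.) I expect this bookkeeping, rather than any analytic difficulty, to be the only subtlety here.

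Finally, it is worth recording that the bound is sharp: the maximum is attained, since the type~$A$ and type~$B$ networks built from two simple cycles sharing a common connecting trajectory, constructed in the following sections, realize precisely six trajectories. Hence six cannot be improved.
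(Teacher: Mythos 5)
Your core double-counting argument is correct and is exactly the reasoning the paper leaves implicit: the corollary is stated without proof as an immediate consequence of Lemmas \ref{max_nodes} and \ref{max_trajectory_per_node}, combined just as you combine them (at most $4\cdot 3=12$ node--trajectory incidences, each trajectory incident to exactly two nodes, hence at most six trajectories); your attention to the impossibility of self-connections under Assumption~A is a harmless refinement. The one genuine error is in your closing sharpness remark: the two-cycle networks of the paper do \emph{not} realize six connecting trajectories. The networks $(A_2^+,A_2^+)$ and $(B_2^+,B_2^+)$ have three connections, and the two-cycle networks with a shared connection such as $(A_3^-,A_3^-)$, $(B_3^-,B_3^-)$, $(A_3^-,A_4^-)$ and $(B_3^-,C_4^-)$ have five (the paper says this explicitly in the proof of Theorem \ref{construct}). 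The bound of six is attained only by the three-cycle networks $(A_3^-,A_3^-,A_4^-)$ and $(B_3^-,B_3^-,C_4^-)$, so if you want to justify the word ``maximum'' rather than merely an upper bound, those are the examples you must cite.
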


\begin{lemma}\label{common_elements}
Two cycles in a simple network in $\R^4$ satisfying Assumption A have at least one node in common.
\end{lemma}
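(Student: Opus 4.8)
The plan is to argue by contradiction. Suppose two cycles $C_1$ and $C_2$ of the network have no node in common. By Lemma \ref{max_nodes} the network has at most four equilibria, and every cycle contains at least two nodes; since $C_1$ and $C_2$ are node-disjoint they must each have exactly two nodes and together account for all four equilibria. By Lemma \ref{2nodes_lemma} the two nodes of $C_1$ lie on a single coordinate axis $\alpha$ and those of $C_2$ on a single axis $\beta$, with $\alpha\neq\beta$. As distinct heteroclinic trajectories cannot meet away from equilibria, $C_1$ and $C_2$ are then disjoint as subsets of $\R^4$.

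Next I would use that a network is, by definition, connected. Hence there is a connecting trajectory joining a node of $C_1$ to a node of $C_2$. Since all four equilibria lie on $\alpha\cup\beta$, this trajectory runs between an $\alpha$-node and a $\beta$-node and therefore lies in the invariant plane $V=\mathrm{span}(\alpha,\beta)$, which already contains all four equilibria. Because this bridge is a saddle-sink connection in $V$, its source $s$ is a saddle \emph{within} $V$, so its unstable manifold relative to $V$ is one-dimensional with two branches. One branch is the bridge itself; the other is a second connecting trajectory issuing from $s$ inside $V$ (using that the planar flow on $V$ is gradient-like, so this branch limits on one of the remaining equilibria).

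Finally I would count connections. The two connections of $s$ inside its own cycle lie in coordinate planes different from $V$, while the bridge and the second branch lie in $V$; I would show these are pairwise inequivalent under $\Gamma$, so that $s$ carries at least four connecting trajectories up to symmetry, contradicting Lemma \ref{max_trajectory_per_node}. Equivalently, the extra branch---together with any images forced by Assumption A(i)---raises the total number of connections above the maximum of six from Corollary \ref{max_trajectory_network}. Either contradiction shows the bridge cannot exist, so the network cannot be connected unless $C_1$ and $C_2$ share a node. The delicate point, and the step I expect to be the main obstacle, is precisely this last count: one must verify that the forced second branch is genuinely a new connection and is not absorbed as a symmetry image of an existing one. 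This requires the explicit description of the admissible symmetry groups in Lemma \ref{sym_group} (which, apart from the semidirect case, contain only sign changes and no coordinate permutations) together with a short case distinction according to the type of the cycles.
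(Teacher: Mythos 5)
Your reduction is sound and coincides with the first half of the paper's proof: node-disjointness plus Lemma \ref{max_nodes} forces both cycles to have exactly two nodes, Lemma \ref{2nodes_lemma} places them on distinct axes $\alpha$ and $\beta$, connectedness of the network forces a bridge connection, and since that bridge lies in a two-dimensional fixed-point space whose closure meets both axes, it lies in the coordinate plane $V=\mathrm{span}(\alpha,\beta)$. The gap is in the endgame, and it is exactly the step you flag as delicate --- but it is worse than delicate: it is false in general. First, the claim that the second branch of $W^u(s)\cap V$ ``limits on one of the remaining equilibria'' is unsupported; nothing in Assumption A makes the flow on $V$ gradient-like, and Assumption A (unlike the stronger hypothesis of Proposition 3.1 of Castro--Lohse) does not exclude equilibria or periodic orbits outside the network, nor escape to infinity, so that branch need not be a connection of the network at all. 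Second, and decisively, the count cannot produce a contradiction: the bridge and the second branch are tangent at $s$ to the \emph{same} eigendirection (the unique expanding direction of $s$ within $V$), and Lemma \ref{max_trajectory_per_node} --- by its own proof --- bounds the number of connecting \emph{directions} by three; at $s$ exactly three are in use (the incoming and outgoing directions of $s$'s own cycle, plus the bridge direction), so nothing is violated. To contradict the statement of the lemma you would need the two branches to be $\Gamma$-inequivalent, but they are typically equivalent: for a type $A$ cycle the isotropy of an internal connection plane $P$ of the $\alpha$-cycle is $\Z_2$ generated by the involution acting as $+\Id$ on $P$ and $-\Id$ on $P^{\perp}$ (this is what makes $P$ a two-dimensional fixed-point space), and for the coordinate planes and sign-change groups arising here (cf.\ the $\kappa_{ij}$ in the proof of Theorem \ref{construct}) this element fixes $\alpha$ pointwise and reverses the $\beta$-direction, hence maps the bridge onto the second branch. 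So ``up to symmetry'' the count stays at three, and the same objection kills the fallback via Corollary \ref{max_trajectory_network}.

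The paper closes the argument with the group structure rather than with trajectory counting. It observes that the internal connections of each two-node cycle join points on opposite sides of the origin of its axis and therefore must cross the remaining two coordinate axes, which consequently cannot be fixed-point spaces. Since $\alpha=L_1$ \emph{is} a one-dimensional fixed-point space (the intersection of the two fixed-point planes of its cycle), its $\Gamma$-orbit is then either $\{L_1\}$ or involves $L_4$. In the first case both axes are $\Gamma$-invariant and neither is a fixed-point space, contradicting simplicity; in the second, $\Gamma\xi_i^A=\xi_j^B$, so the two ``cycles'' are symmetric copies of a single one, not two distinct cycles. In other words, the obstruction is representation-theoretic: the hypothetical configuration admits no symmetry group compatible with simplicity. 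Any repair of your argument along the lines you sketch (a case analysis over the admissible groups of Lemma \ref{sym_group}) would in effect reproduce this argument, so the counting detour buys nothing.
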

\begin{proof}
Assume there are two cycles without a common node. By Lemma \ref{max_nodes} each cycle has two equilibria, say $[\xi_1^A \to \xi_2^A \to \xi_1^A]$ and $[\xi_1^B \to \xi_2^B \to \xi_1^B]$. Then there must be a connection between the two cycles, say $[\xi_1^A \to \xi_1^B]$. Because of Lemma \ref{2nodes_lemma}, for $i=1,2$ the $\xi_i^A$ are on one axis, say $L_1$, and the $\xi_i^B$ are on another axis, say $L_4$. The existence of connections between $\xi_1^A$ and $\xi_2^A$ prevents the other axes from being fixed-point spaces. Therefore, the group orbit of $L_1$ is either itself or $L_4$.

If it is $L_1$, then the group image of $L_4$ is also $L_4$. Hence, $\Gamma$ fixes $L_1$ and $L_4$, so that neither is a fixed-point space, contradicting the definition of simple cycles. If it is $L_4$, then $\Gamma \xi_i^A = \xi_j^B$, so there is only one cycle.
\end{proof}

\begin{corollary}\label{connecting_trajectories}
Two cycles in a simple network in $\R^4$ satisfying Assumption A have at least one connecting trajectory in common.
\end{corollary}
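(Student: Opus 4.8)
The plan is to bootstrap directly from Lemma~\ref{common_elements}: the two cycles already share a node, so the task reduces to promoting a common node to a common connection. By Lemma~\ref{common_elements} fix a node $\xi$ belonging to both cycles; it lies on a coordinate axis with radial eigendirection $e_r$. I would then run a counting argument at $\xi$ based on the eigenstructure of $\mathrm{d}f(\xi)$.

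First I would record the local picture at $\xi$. The linearization $\mathrm{d}f(\xi)$ has four eigenvalues; one is radial (along $\xi$'s axis), and by Lemma~\ref{max_trajectory_per_node} only the three non-radial eigendirections can carry connections. Since the cycles are simple and the equilibria and connections lie in coordinate subspaces, a connecting trajectory through $\xi$ leaves or enters tangent to one of these non-radial eigendirections, and within the unique coordinate plane determined by that direction together with $\xi$'s axis the saddle-sink connection is unique; hence distinct connecting trajectories at $\xi$ occupy distinct eigendirections. Moreover, an outgoing connection leaves along an expanding eigendirection while an incoming one arrives along a contracting eigendirection, so these two classes of directions are disjoint. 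Writing $p$ and $q$ for the numbers of expanding and contracting non-radial eigendirections, hyperbolicity gives $p+q=3$.

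Next I would bring in the cycle structure and argue by contradiction. Because each cycle has an elementary building block, the node $\xi$ is visited exactly once within each cycle, so each of the two cycles contributes one outgoing and one incoming connection at $\xi$. Suppose the two cycles shared no connecting trajectory. Then their two outgoing connections would be distinct trajectories and hence would require two distinct expanding eigendirections, forcing $p\ge 2$; likewise the two distinct incoming connections would force $q\ge 2$. This gives $p+q\ge 4$, contradicting $p+q=3$. Therefore the cycles must coincide in an outgoing or an incoming connection at $\xi$, which is exactly a shared connecting trajectory, proving the corollary.

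The main obstacle I anticipate is making the pigeonhole rigorous, i.e.\ the identification of connecting trajectories with eigendirections. One must check that two distinct trajectories leaving (respectively entering) $\xi$ really occupy different eigendirections even when they happen to be symmetry-related: if $t'=\gamma t$ with $t,t'$ both leaving $\xi$, then $\gamma$ fixes $\xi$, and $\gamma$ must move the eigendirection of $t$ (otherwise $t'=t$), so symmetry-related distinct trajectories still consume distinct directions and the bound $p+q=3$ is not circumvented. With this in place the argument uses Assumption~A only insofar as it guarantees that each cycle passes through $\xi$ once, and the corollary follows at once from Lemma~\ref{common_elements}.
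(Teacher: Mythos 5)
Your proof is correct and follows essentially the same route as the paper's: Lemma~\ref{common_elements} supplies the common node, and the impossibility of four distinct connecting trajectories at that node (Lemma~\ref{max_trajectory_per_node}: only three non-radial eigendirections are available) forces a shared connection. The paper's version is simply shorter --- it counts two incoming plus two outgoing trajectories against the bound of three --- while your expanding/contracting split ($p \geq 2$, $q \geq 2$, $p+q=3$) is an equivalent refinement of the same pigeonhole.
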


\begin{proof}
By Lemma \ref{common_elements} there is a common node $\xi_i$. Suppose there is no common connection to or from $\xi_i$. Then there must be two incoming and two outgoing trajectories from $\xi_i$. This contradicts Lemma \ref{max_trajectory_per_node}.
\end{proof}

\begin{theorem}\label{construct}
In $\R^4$, the following is the complete list of simple heteroclinic networks satisfying assumption A:
\begin{itemize}
	\item  $(A_2,A_2)$; $(A_3,A_3)$; $(A_3,A_4)$; $(A_3,A_3,A_4)$;
	\item  $(B_2^+,B_2^+)$; $(B_3^-,B_3^-)$;
	\item  $(B_3^-,C_4^-)$; $(B_3^-,B_3^-,C_4^-)$.
\end{itemize}
\end{theorem}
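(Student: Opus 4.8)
The plan is to prove the two halves separately: that every network in the list occurs, and that no other network can. Existence is by explicit construction --- the cases $(B_2^+,B_2^+)$, $(B_3^-,B_3^-)$ and $(B_3^-,C_4^-)$ are already realised in \cite{KS} and \cite{CastroLohse}, while the type $A$ networks and the three-cycle networks are built in the appendix. The remainder of the argument, which is the substance of the theorem, is completeness: assuming a simple network in $\R^4$ satisfying Assumption A, I would show that it must appear in the list.

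For completeness I first invoke Lemma \ref{sym_group} to split into two independent problems. Type $A$ cycles can only be joined to other type $A$ cycles, and among types $B$ and $C$ the only symmetry-compatible options are $(B_2^+,B_2^+)$, $(B_3^-,B_3^-)$, $(C_2^-,C_2^-)$, $(C_4^-,C_4^-)$ and the mixed pair $(B_3^-,C_4^-)$. Next, Lemma \ref{max_nodes} bounds the number of equilibria by four, so no cycle with more than four nodes can occur; the only admissible cycles are therefore $A_2,A_3,A_4$ on the type $A$ side and $B_2^+,B_3^-,C_2^-,C_4^-$ on the other. I then organise the enumeration by the number of cycles, using throughout that any two cycles share a node (Lemma \ref{common_elements}) and a connecting trajectory (Corollary \ref{connecting_trajectories}), that each node carries at most three trajectories (Lemma \ref{max_trajectory_per_node}), and that the network has at most six trajectories in all (Corollary \ref{max_trajectory_network}).

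The two-cycle case splits as follows. On the $B$/$C$ side I appeal directly to \cite[Proposition 3.1]{CastroLohse}, which already disposes of two-cycle networks of these types and leaves exactly $(B_2^+,B_2^+)$, $(B_3^-,B_3^-)$ and $(B_3^-,C_4^-)$, ruling out $(C_2^-,C_2^-)$ and $(C_4^-,C_4^-)$. On the type $A$ side the key observation is Lemma \ref{2nodes_lemma}: the two nodes of an $A_2$ cycle lie on a single axis, so the connection it shares with any partner has both endpoints on one axis. This is incompatible with being a connection of an $A_3$ or $A_4$ cycle, whose consecutive nodes lie on distinct axes; hence an $A_2$ cycle can only be paired with another $A_2$, giving $(A_2,A_2)$. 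The remaining type $A$ pairs are $(A_3,A_3)$, $(A_3,A_4)$ and the putative $(A_4,A_4)$; the last is excluded because two distinct four-node cycles on the same four axes would, at every node, have to share either their incoming or their outgoing trajectory to respect the bound of three trajectories per node, and tracking these coincidences around the cycle forces the two cycles to coincide.

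Finally, for three or more cycles a third cycle must share a connecting trajectory with each of the two already present, and the six-trajectory bound together with the three-per-node bound leaves very little room: adding a fourth cycle always pushes the trajectory count above six, while the three-cycle configurations that survive are exactly $(A_3,A_3,A_4)$ and $(B_3^-,B_3^-,C_4^-)$. I expect the main obstacle to be precisely this combinatorial bookkeeping --- keeping track, node by node, of which incoming and outgoing trajectories successive cycles are forced to share, and confirming in each surviving case that the shared and unshared connections really do assemble into closed saddle--sink cycles in the appropriate two-dimensional fixed-point planes rather than merely into an abstract graph. The exclusion of $(A_4,A_4)$ and of all four-cycle networks is representative of the care this step requires.
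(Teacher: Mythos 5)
Your overall architecture matches the paper's: the split via Lemma \ref{sym_group}, the bounds from Lemmas \ref{max_nodes} and \ref{max_trajectory_per_node} and Corollaries \ref{max_trajectory_network} and \ref{connecting_trajectories}, the appeal to \cite[Proposition 3.1]{CastroLohse} on the $B$/$C$ side, and Brannath plus the appendix for existence. The gap is in the one step that carries the real content on the type $A$ side: your assertion that consecutive nodes of an $A_3$ or $A_4$ cycle lie on distinct axes, which is what you use to kill $(A_2,A_3)$ and $(A_2,A_4)$, is precisely what must be proved, and you give no argument. For $A_3$ it does follow from a short argument (the paper's): if $\xi_1,\xi_2\in L_1$ and $\xi_3$ lies on another axis, then both $[\xi_2\to\xi_3]$ and $[\xi_3\to\xi_1]$ are forced into the single coordinate plane $P_{13}$ (a two-dimensional space containing points of two coordinate axes is their span), so $\xi_3$ would have to be simultaneously a sink and a saddle within $P_{13}$; the connections cannot both be saddle-sink, hence are not robust.

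For $A_4$, however, no such plane argument is available, and this is where your proof fails. Put $\xi_1,\xi_2$ on the $x_1$-axis (opposite sides of the origin, as simplicity requires) and $\xi_3,\xi_4$ on the $x_3$-axis, with $[\xi_2\to\xi_3],[\xi_4\to\xi_1]\subset P_{13}$ and $[\xi_3\to\xi_4]\subset P_{34}$: at every node the incoming and outgoing connections lie in different planes, so this configuration is perfectly consistent at the level of robust saddle-sink connections in invariant planes — your claimed incompatibility simply is not visible at this level. The paper's exclusion of $(A_2,A_4)$ runs differently: because $\xi_1,\xi_2$ are on opposite sides of the origin, the $A_2$ connections in $P_{12}$ and $P_{14}$ cross the $x_2$- and $x_4$-axes, so those axes carry no equilibria, which forces the two extra nodes of the $A_4$ cycle onto the $x_3$-axis as above; one then uses that for such a cycle the group orbit of the equilibria is non-trivial (the $\Z_2$ isotropies of the planes are rotations by $\pi$, e.g.\ $\kappa_{12}\xi_3=-\xi_3$), so images $\gamma.\xi_i$ land on the two axes already occupied, and together with Assumption A(i) (which prevents $\gamma.\xi_i$ from coinciding with another node) this violates condition (ii) in the definition of a simple network. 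Without an argument of this kind, $(A_2,A_4)$ survives your enumeration and the completeness claim is not established. Your graph-theoretic exclusion of $(A_4,A_4)$ and the trajectory-count bookkeeping for three or more cycles are correct and essentially match the paper.
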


\begin{proof}
Proposition 3.1 in \cite{CastroLohse} shows that the only networks made up of two cycles of type $B$ or $C$ are the ones listed here. 
Recall that the network $(B_2^+,B_2^+)$ is given in \cite{CastroLohse}, as is the network  $(B_3^-,C_4^-)$, while $(B_3^-,B_3^-)$ is studied by Kirk and Silber \cite{KS}. 
Note that the assumption of a common connecting trajectory in \cite[Proposition 3.1]{CastroLohse} is redundant given Corollary \ref{connecting_trajectories}. 
Although the assumption on non-existence of critical elements outside the network in \cite[Proposition 3.1]{CastroLohse} is stronger than Assumption A, we remark 
that the latter is sufficient for its proof.

In order to show that we do not have any other networks involving types $B$ and $C$, it follows from Lemma \ref{common_elements} that constructing a network from a cycle requires the addition of a connection to an already existing node. Then,
\begin{enumerate}
 \item[(i)] adding a cycle to the $(B_2^+,B_2^+)$ network requires an additional connecting trajectory at one of the nodes, contradicting Lemma \ref{max_trajectory_per_node}.
 \item[(ii)] adding a cycle to the $(B_3^-,B_3^-)$ or $(B_3^-,C_4^-)$ network, both of which already have five connecting trajectories, without contradicting Corollary \ref{max_trajectory_network}, leads to the $(B_3^-,B_3^-,C_4^-)$ network.
\end{enumerate}
The existence of the $(B_3^-,B_3^-,C_4^-)$ network is established by Brannath \cite{Brannath}. 

\medskip

From the previous results we know that, other than the networks listed, only networks joining cycles of type $A_2$ to cycles of type either $A_3$ or $A_4$ are possible. We begin by showing that there is an obstruction to the existence of such networks. 

Let $P_{ij} = \{ (x_1,x_2,x_3,x_4) \in \R^4 \mid x_k=0 \; \mbox{ for  } \; \; k\notin \{i,j\} \}$.
By Lemma \ref{2nodes_lemma} we may assume that the $A_2$ cycle is 
$[\xi_1 \rightarrow \xi_2 \rightarrow \xi_1]$ where $\xi_1, \xi_2 \in L_1$. Also, we may assume that $[\xi_1 \rightarrow \xi_2] \subset P_{12}$ and $[\xi_2 \rightarrow \xi_1] \subset P_{14}$. From Corollary \ref{connecting_trajectories}, assume that $[\xi_1 \rightarrow \xi_2] \subset P_{12}$ is the common connecting trajectory. 

Then the $A_3$ cycle connects $[\xi_1 \rightarrow \xi_2 \rightarrow \xi_3 \rightarrow \xi_1]$ where we can choose $[\xi_2 \rightarrow \xi_3]$ to be in $P_{13}$. Because $\xi_1$ and $\xi_2$ are on the same line, it follows that $[\xi_3 \rightarrow \xi_1]$ is also in $P_{13}$. But then the connections in $P_{13}$ are not of saddle-sink type and therefore, are not robust.

Concerning a connection to a cycle of type $A_4$, note that since $\xi_1$ and $\xi_2$ are on opposite sides of the origin, the connections between them in $P_{12}$ and $P_{14}$ must cross the $x_2$- and $x_4$-axis, respectively. There can be no equilibria on these axes as otherwise they would be invariant, preventing the existence of the connections in $P_{12}$ and $P_{14}$. 
Then the $A_4$ cycle must have two additional equilibria $\xi_3, \xi_4$ on the $x_3$-axis and  $[\xi_2 \rightarrow \xi_3], [\xi_4 \rightarrow \xi_1] \subset P_{13}$. Note that, for these cycles, the group orbit of the equilibria is non-trivial and $\gamma .\xi_i$ must belong to either of the axes already containing equilibria. This contradicts (ii) in the definition of simple networks.

\medskip

In order to show that the listed networks exist, we provide symmetry groups with the required fixed-point spaces for the existence of the networks of type $A$. In appendix A we construct vector fields supporting the networks of type $A$. A systematic approach to the construction of simple cycles, including homoclinic cycles, may be found in \cite{PC2015} where the quaternionic presentation is used as an alternative description of the symmetry groups.

\medskip

Since the symmetry group supporting a heteroclinic cycle of type $A$ has no reflections but still has subgroups isomorphic to $\Z_2$, we construct a group generated by rotations by $\pi$ on coordinate planes. This is multiplication by $-1$ of two of the four coordinates of $\R^4$ and is isomorphic to $\Z_2 = \langle -\Id \rangle
$. We look for groups generated by elements $\kappa_{ij}$ such that $P_{ij} = $Fix$(\langle \kappa_{ij} \rangle )$. 
For instance, we have
$$
\kappa_{12} = (\Id, R_{\pi_{34}})
$$
where $R_{\pi_{34}}$ is a rotation by $\pi$ on the plane 
$$
P_{34} = \{(0,0,x_3,x_4)\}.
$$
Then $\kappa_{12}(x_1,x_2,x_3,x_4) = (x_1,x_2,-x_3,-x_4)$.

We construct the groups so as to guarantee the existence of the invariant planes for the heteroclinic connections. These are 
\begin{itemize}
	\item  the $(A_2,A_2)$ network has one cycle with connections $[\xi_1 \rightarrow \xi_2 \rightarrow \xi_1]$ in  $P_{12} \cup P_{13}$ and the other with connections $[\xi_1 \rightarrow \xi_2 \rightarrow \xi_1]$ in $P_{12} \cup P_{14}$;
	\item  the $(A_3,A_3)$ network has one cycle with connections $[\xi_1 \rightarrow \xi_2 \rightarrow \xi_3 \rightarrow \xi_1]$ in  $P_{12} \cup P_{23} \cup P_{13}$ and the other with connections $[\xi_1 \rightarrow \xi_2 \rightarrow \xi_4 \rightarrow \xi_1]$ in  $P_{12} \cup P_{24} \cup P_{14}$;
	\item  the $(A_3,A_4)$ network has the $A_3$ cycle with connections $[\xi_1 \rightarrow \xi_2 \rightarrow \xi_3 \rightarrow \xi_1]$ in  $P_{12} \cup P_{23} \cup P_{13}$ and the $A_4$ cycle with connections $[\xi_1 \rightarrow \xi_2 \rightarrow \xi_3 \rightarrow \xi_4 \rightarrow \xi_1]$ in $P_{12} \cup P_{23} \cup P_{34} \cup P_{14}$;
	\item  the $(A_3,A_3,A_4)$ has, additionally to the previous, the $A_3$ cycle with connections $[\xi_1 \rightarrow \xi_2 \rightarrow \xi_4 \rightarrow \xi_1]$ in  $P_{12} \cup P_{24} \cup P_{14}$.
\end{itemize}

The groups for the respective networks are as follows:
\begin{itemize}
	\item  the $(A_2,A_2)$ network exists under the action of $\Gamma = \langle \kappa_{12}, \kappa_{13} \rangle$;
	\item  the $(A_3,A_3)$-, $(A_3, A_4)$- and $(A_3,A_3,A_4)$ networks exist under the action of $\Gamma = \langle \kappa_{12}, \kappa_{13}, \kappa_{34}  \rangle$.
\end{itemize}
We have $\kappa_{14} = \kappa_{12} \circ \kappa_{13}$, so $\Gamma = \langle \kappa_{12}, \kappa_{13} \rangle$ has $P_{12}$, $P_{13}$ and $P_{14}$ as invariant planes. Since $\kappa_{23}=\kappa_{13} \circ \kappa_{34}$ and $\kappa_{24}=\kappa_{14} \circ \kappa_{34}$, the group $\Gamma = \langle \kappa_{12}, \kappa_{13}, \kappa_{34} \rangle$ has as invariant planes all the coordinate planes.
\end{proof}
Considering the symmetry groups in the construction in the appendix, we find that $-\Id=\kappa_{12}\kappa_{34} \in \Gamma$ for the type $A$ cycles with three and four nodes. For $A$ cycles with two nodes $-\Id$ cannot be an element of $\Gamma$ since $\xi_1$ and $\xi_2$ are on the same axis and $-\Id$ maps $\xi_j$ to $-\xi_j$. In view of this, we write $A_2^+$, $A_3^-$ and $A_4^-$ from now on.\footnote{We thank an anonymous referee for this comment on the superscripts for $A$ cycles.} 

The networks made of cycles with the same number of nodes of type $A$ or $B$ only have an analogous geometry as seen in Figures \ref{B2B2} and \ref{B3B3}. The networks involving cycles of different types or of the same type but with a different number of nodes are represented in Figure \ref{B3C4}.

\begin{figure}[!htb]
\centerline{
\includegraphics[width=0.8\textwidth]{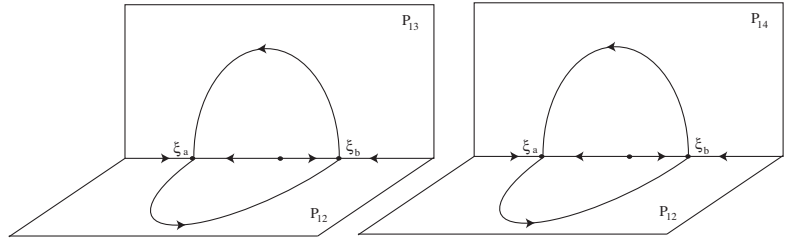}}
\caption{The cycles in the $(A_2^+,A_2^+)$- and the $(B_2^+,B_2^+)$ networks. The common connecting trajectory is in $P_{12}$.\label{B2B2}}
\end{figure}

\vspace{-1cm}
\begin{figure}[!htb]
\centerline{
\includegraphics[width=0.8\textwidth]{cyclesB3.eps}}
\caption{The cycles in the $(A_3^-,A_3^-)$- and the $(B_3^-,B_3^-)$ networks. The common connecting trajectory is in $P_{12}$.\label{B3B3}}
\end{figure}

\begin{figure}[!htb]
\centerline{
\includegraphics[width=0.75\textwidth]{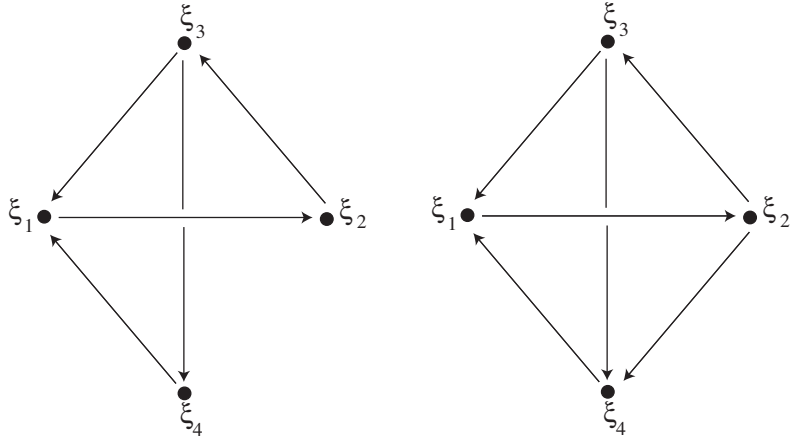}}
\caption{The $(B_3^-,C_4^-)$- or $(A_3^-,A_4^-)$- (left) and the $(B_3^-,B_3^-,C_4^-)$- or $(A_3^-,A_3^-,A_4^-)$ networks (right). On the right, the $A_4^-$ cycle ($C_4^-$) has two connecting trajectories in common with each of the $A_3^-$ cycles ($B_3^-$). The connection $[\xi_1 \rightarrow \xi_2]$ is common to all three cycles.
\label{B3C4}}
\end{figure}

\bigbreak

\section{Stability of cycles in type A networks}\label{stability}

In this section, we calculate the c-indices for each network of type A. We point out that, even though the geometry of the networks of type $A$ and of type $B$ is analogous, the stability indices behave differently.

For the rest of this paper, we assume that trajectories which leave a neighbourhood of the whole network do not come back. This ensures that the local and non-local stability indices coincide, as is implicit in \cite{PodviginaAshwin2011}.

In order to calculate the stability index, we linearize the vector field at each node. At a node $\xi_i$, there are four eigenvalues, often named $-r_i$ (radial), $-c_{ij}$ (contracting), $e_{ik}$ (expanding) and $t_{il}$ (transverse) for the role they play in the geometry of a cycle. We differ from this notation only for transverse eigenvalues $t_{il}$ that are expanding or contracting with respect to some other cycle in the network.\footnote{These are then called $e_{il}$ and $-c_{il}$, respectively.} The constants $r_i$, $c_{ij}$ and $e_{ik}$ are always assumed positive but $t_{il}$ can have either sign. For transverse eigenvalues $t_{il}$ with eigendirections away from the network, whenever possible we assume $t_{il}<e_{ik}$ so that, when the transverse eigenvalue is positive, it is weaker than the expanding one.
Define, following \cite{PodviginaAshwin2011}, $a_i=c_{ij}/e_{ik}$ and $b_i=-t_{il}/e_{ik}$.

Since the calculations in this section rely heavily on Theorem 4.1 in \cite{PodviginaAshwin2011}, we provide its statement together with relevant information for its understanding next.

We begin with the functions $h_{l,j}$ for $1\leq j\leq m$ and $l\leq j$  from \cite[p.\ 900]{PodviginaAshwin2011}:
\begin{align*}
 h_{j,j}(y)&=y,\\
 h_{l,j}(y)&=\begin{cases}+\infty &\textnormal{if} \quad a_l-b_l<0, \\ \frac{a_lh_{l+1,j}(y)-a_l+1}{a_l-b_l} &\textnormal{if} \quad 0<a_l-b_l<1,, \\ a_lh_{l+1,j}(y)-b_l &\textnormal{if} \quad a_l-b_l>1. \end{cases}
\end{align*}
Note that as usual the indices are to be understood modulo $m$, which is the number of equilibria in the cycle. With this we can reproduce the result on the stability indices of type $A$ cycles.

\bigbreak

\noindent {\bf Theorem (4.1 in \cite{PodviginaAshwin2011})}
{\em For a type $A$ cycle with $m$ equilibria the stability indices are as follows:
\begin{enumerate}
 \item[(a)] If $\rho>1$ and $b_j>0$ for all $j$, then $\sigma_j=+\infty$ for all $j$.
 \item[(b)] If $\rho>1$, $b_j>-1$ for all $j$ and $b_j<0$ for $j=J_1, \ldots J_L$, then
 \begin{equation*}
  \sigma_j=\min_{s=J_1,\ldots J_L}h_{\tilde{j},s}\left( -\frac{1}{b_s}\right)-1, \quad \textnormal{where} \quad \tilde{j}=\begin{cases}j &\textnormal{if} \quad j \leq s,\\ j-m &\textnormal{if} \quad j > s. \end{cases}
 \end{equation*}
 \item[(c)] If $\rho<1$ or there exists $j$ with $b_j<-1$, then $\sigma_j=-\infty$ for all $j$.
\end{enumerate}
}

When the cycles are part of a network the next result shows how their indices are further constrained. The next four results are proved for a generic connection $[\xi_i \to \xi_j]$ and their four relevant eigenvalues have $j$ as a first subscript. We preserve the notation of \cite{PodviginaAshwin2011} for the stability indices.

\begin{theorem}\label{minus_infinity}
Consider a network of type $A$ cycles with linearization at $\xi_j$ as above. Let $[\xi_i \rightarrow \xi_j]$ be a common connecting trajectory. At least one of the cycles has all c-indices equal to $-\infty$.
\end{theorem}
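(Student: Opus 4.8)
The plan is to locate the node at which the two cycles sharing the common trajectory part ways and to read off the relevant eigenvalues there. Since the two cycles agree along $[\xi_i \to \xi_j]$ and are nonetheless distinct, following them forward from $\xi_j$ they must eventually leave some node along different outgoing connections; relabelling if necessary, I take $\xi_j$ itself to be this node of divergence. At $\xi_j$ the common incoming connection is a saddle--sink connection for both cycles, so the corresponding eigendirection is the contracting one, with eigenvalue $-c_{jk}<0$, while the radial eigendirection, with eigenvalue $-r_j$, lies along the axis carrying $\xi_j$. Both of these directions are shared by the two cycles.

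The key observation is that only two eigendirections at $\xi_j$ then remain, and the two cycles use them in opposite roles. Each cycle leaves $\xi_j$ along its own expanding direction; since these two outgoing connections are distinct and neither is radial nor coincides with the contracting one, they are precisely the two remaining eigendirections. Writing $e_j^{(1)},e_j^{(2)}>0$ for the two expanding eigenvalues, each positive by virtue of being expanding for its cycle, the expanding direction of one cycle is exactly the transverse direction of the other. Hence the transverse eigenvalues satisfy $t_j^{(1)}=e_j^{(2)}>0$ and $t_j^{(2)}=e_j^{(1)}>0$, so that
\begin{equation*}
 b_j^{(1)}=-\frac{e_j^{(2)}}{e_j^{(1)}}<0, \qquad b_j^{(2)}=-\frac{e_j^{(1)}}{e_j^{(2)}}<0, \qquad b_j^{(1)}b_j^{(2)}=1.
\end{equation*}
Because $\mathrm{d}f(\xi_j)$ has no double eigenvalue, $e_j^{(1)}\neq e_j^{(2)}$, and therefore one of these two negative numbers is strictly less than $-1$.

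It then remains to invoke case (c) of the reproduced Theorem 4.1 of \cite{PodviginaAshwin2011}: the cycle for which $b_j<-1$ at the node $\xi_j$ has all of its c-indices equal to $-\infty$, which is exactly the claim. I expect the main obstacle to be the geometric bookkeeping of the second paragraph --- verifying that the two expanding directions of the two cycles genuinely exhaust the two non-radial, non-contracting eigendirections at the divergence node, and that both associated eigenvalues are positive --- since everything after that is a one-line computation. A secondary point worth stating explicitly is that the divergence node need not be the endpoint of the \emph{particular} common connection one begins with; for example the $A_3^-$ and $A_4^-$ cycles share both $[\xi_1\to\xi_2]$ and $[\xi_2\to\xi_3]$ and separate only at $\xi_3$. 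The conclusion is nevertheless global, because case (c) forces \emph{all} c-indices of the offending cycle to be $-\infty$ as soon as a single node contributes $b_j<-1$.
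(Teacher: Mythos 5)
Your proof is correct and follows essentially the same route as the paper: at the node where the two cycles separate, the expanding eigenvalue of one cycle is the transverse eigenvalue of the other, so the cycle with the smaller expanding eigenvalue has $b_j<-1$ and case (c) of Theorem 4.1 in \cite{PodviginaAshwin2011} forces all of its c-indices to be $-\infty$. You are in fact slightly more careful than the paper's own proof, which implicitly assumes the cycles already diverge at $\xi_j$; your relabelling step (legitimate because the connection entering the divergence node is itself a common connecting trajectory) explicitly covers situations such as the $(A_3^-,A_4^-)$ network, where the cycles share $[\xi_1\to\xi_2]$ but only separate at $\xi_3$.
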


\begin{proof}
Assume, without loss of generality, that at $\xi_j$ the expanding eigenvalues are $e_{ja}$ and $e_{jb}$ satisfying $e_{ja}>e_{jb}$. Let $X_a$ and $X_b$ denote the cycles corresponding to the eigenvalues $e_{ja}$ and $e_{jb}$, respectively.

It suffices to show that for cycle $X_b$, we have case (c) of Theorem 4.1 in \cite{PodviginaAshwin2011}, hence $\sigma_j=-\infty$. In fact, for the cycle $X_b$ at $\xi_j$, we have $t_{jb}=e_{ja}$ so that $b_j = -e_{ja}/e_{jb} < -1$. 
\end{proof}

Note that for networks consisting of three cycles, two of these cycles have c-indices all equal to $-\infty$. 

On the side of stability, we have the following:

\begin{lemma}
For an $A$ cycle, all finite stability indices are non-negative.
\end{lemma}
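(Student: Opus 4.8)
The plan is to reduce the statement to the trichotomy of Theorem 4.1 in \cite{PodviginaAshwin2011} and to notice that a \emph{finite} index can arise only in case (b): in case (a) every index equals $+\infty$ and in case (c) every index equals $-\infty$, and neither of these is finite. It therefore suffices to prove that, whenever the hypotheses of case (b) are in force, one has $\sigma_j \geq 0$ for every $j$. This is exactly what is wanted when $\sigma_j$ is finite, and it holds vacuously when $\sigma_j = +\infty$.

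Writing out $\sigma_j$ from case (b), the inequality $\sigma_j \geq 0$ is equivalent to
\[
\min_{s = J_1, \ldots, J_L} h_{\tilde{j},\, s}\!\left(-\tfrac{1}{b_s}\right) \geq 1 ,
\]
and since a minimum is $\geq 1$ precisely when each of its terms is, the whole statement collapses to the single claim: for every admissible pair $(\tilde{j}, s)$ with $\tilde{j} \leq s$ one has $h_{\tilde{j},\, s}(-1/b_s) \geq 1$. Here $s \in \{J_1, \ldots, J_L\}$ is an index with $b_s < 0$, and the case (b) hypothesis $b_s > -1$ forces $-1 < b_s < 0$, so that $y_s := -1/b_s > 1$.

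I would establish this claim by downward induction on the first index $l$, running from $l = s$ down to $l = \tilde{j}$ (indices taken modulo $m$), with induction hypothesis $h_{l,\,s}(y_s) \geq 1$. The base case is immediate, since $h_{s,s}(y_s) = y_s > 1$. For the inductive step I examine the three branches of the recursion for $h_{l,s}$, using throughout that $a_l = c_{ij}/e_{ik} > 0$. If $a_l - b_l < 0$, then $h_{l,s}(y_s) = +\infty \geq 1$. If $a_l - b_l > 1$, then $h_{l,s}(y_s) = a_l h_{l+1,s}(y_s) - b_l \geq a_l - b_l > 1$, using $h_{l+1,s}(y_s) \geq 1$ and $a_l > 0$. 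The remaining branch $0 < a_l - b_l < 1$ is the delicate one: setting $H := h_{l+1,s}(y_s) \geq 1$, the numerator equals $a_l H - a_l + 1 = a_l(H-1) + 1 \geq 1 > 0$, and dividing by the denominator $a_l - b_l \in (0,1)$ only enlarges it, so $h_{l,s}(y_s) \geq 1/(a_l - b_l) > 1$. The value $+\infty$, once it appears, propagates through all three branches and respects the bound, so the induction closes.

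The step that needs care, and which I expect to be the main obstacle, is precisely this middle branch $0 < a_l - b_l < 1$: it is the only place where the invariant ``$\geq 1$'' is not transparent, since one must simultaneously control the affine combination in the numerator and the amplifying effect of dividing by a denominator smaller than $1$. Once these inequalities are verified, the induction yields $h_{\tilde{j},\,s}(-1/b_s) \geq 1$ for every relevant $s$, hence $\sigma_j \geq 0$ in case (b), which is the assertion.
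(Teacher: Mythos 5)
Your proof is correct, but it takes a genuinely different route from the paper's. Both arguments begin identically, by discarding cases (a) and (c) of Theorem 4.1 in \cite{PodviginaAshwin2011} so that only case (b) can produce a finite index. At that point the paper finishes in one line: it invokes the fact, drawn from Podvigina and Ashwin's analysis, that in case (b) one has $\sigma_{j,-}=0$ for all $j$, whence $\sigma_j=\sigma_{j,+}\geq 0$ since $\sigma_{j,+}$ is non-negative by construction. You instead work directly with the stated formula $\sigma_j=\min_s h_{\tilde{j},s}(-1/b_s)-1$ and prove, by downward induction through the recursion defining $h_{l,s}$, that each value $h_{\tilde{j},s}(-1/b_s)$ is at least $1$; your three-branch case analysis is sound, including the delicate branch $0<a_l-b_l<1$, where the numerator $a_l(H-1)+1\geq 1$ and division by $a_l-b_l\in(0,1)$ only increases the quotient. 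What the paper's argument buys is brevity, at the cost of relying on information internal to \cite{PodviginaAshwin2011} (the vanishing of $\sigma_{j,-}$ in case (b)) that is not contained in the statement of Theorem 4.1 as reproduced in the text. What your argument buys is self-containedness --- it needs only the reproduced statement and the definition of the functions $h_{l,j}$ --- and it in fact yields the slightly stronger conclusion that every finite index arising in case (b) is strictly positive, since each $h_{\tilde{j},s}(-1/b_s)>1$; this strict form is exactly how the lemma is used later in the paper.
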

\begin{proof}
In Theorem 4.1. of \cite{PodviginaAshwin2011} the only case with finite indices is (b), where $\sigma_{j,-}=0$ for all $j$ and thus $\sigma_j = \sigma_{j,+} \geq 0$.
\end{proof}
Taking into account the fact that for any cycle in $\R^4$, $\sigma_j=-\infty$ for some $j$ if and only if $\sigma_j=-\infty$ for all $j$ (Corollary 4.1 of \cite{PodviginaAshwin2011}), this gives
\begin{corollary}
Generically, an $A$ cycle with $\sigma_j>-\infty$ for some $j$, is e.a.s.
\end{corollary}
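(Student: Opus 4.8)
The plan is to deduce the corollary from Theorem \ref{local_index}, which characterises essential asymptotic stability through the sign of the local stability index along each connecting trajectory. Under the standing assumption of this section---that trajectories leaving a neighbourhood of the network do not return---the local and non-local indices coincide, so it suffices to show that, generically, every c-index $\sigma_j$ is strictly positive. Once this is known, $\sigma_{\textnormal{loc}}(x)=\sigma(x)>0$ along all connections (the index is constant along trajectories by Theorem 2.2 in \cite{PodviginaAshwin2011}), the hypotheses of Theorem \ref{local_index} are satisfied, and e.a.s.\ follows.

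First I would reduce the sign question to the trichotomy of Theorem 4.1 in \cite{PodviginaAshwin2011}. The hypothesis gives $\sigma_j>-\infty$ for some $j$; by Corollary 4.1 of \cite{PodviginaAshwin2011} this forces $\sigma_j>-\infty$ for \emph{all} $j$, so the cycle is not in case (c) and only cases (a) and (b) remain. In case (a) all indices equal $+\infty$, which is already strictly positive, so e.a.s.\ is immediate and no genericity is needed. In case (b) all indices are finite, and the preceding Lemma gives $\sigma_j\geq 0$ for every $j$. It therefore only remains to rule out the degenerate equalities $\sigma_j=0$.

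The hard part is making the word \emph{generically} precise in case (b), i.e.\ showing that the eigenvalue configurations producing some $\sigma_j=0$ form a negligible set. Here I would work from the explicit formula $\sigma_j=\min_{s}h_{\tilde{j},s}(-1/b_s)-1$: each $h_{l,s}$ is a continuous, piecewise real-analytic (indeed piecewise affine or rational) function of the quantities $a_l=c_{lj}/e_{lk}$ and $b_l=-t_{ll'}/e_{lk}$, which are themselves rational in the eigenvalues at the nodes. Consequently the condition $\sigma_j=0$ reduces, on each branch of the piecewise definition, to a single nontrivial real-analytic equation in the eigenvalue parameters, and is not satisfied on any open set. Hence $\{\sigma_j=0\}$ lies in a finite union of zero sets of nonzero analytic functions, a set of Lebesgue measure zero with empty interior; taking the finite union over $j$ preserves this. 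Off this exceptional set the inequalities $\sigma_j\geq 0$ are strict.

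Combining the two cases, for a generic choice of eigenvalues every $\sigma_j$ is strictly positive, so by the reduction in the first paragraph Theorem \ref{local_index} yields that the cycle is e.a.s. I expect the only genuine subtlety to be the bookkeeping in the genericity step---verifying that the branch structure of the functions $h_{l,s}$ cannot conspire to make $\sigma_j\equiv 0$ on a full-dimensional region---which the explicit, monotone form of these functions rules out.
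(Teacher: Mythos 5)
Your proposal is correct and follows essentially the same route the paper compresses into a single sentence: upgrade the hypothesis to $\sigma_j>-\infty$ for all $j$ via Corollary 4.1 of \cite{PodviginaAshwin2011}, use the preceding Lemma to get $\sigma_j\geq 0$ in the remaining cases of Theorem 4.1, invoke genericity to exclude the equalities $\sigma_j=0$, and conclude e.a.s.\ through Theorem \ref{local_index} under the standing assumption that local and non-local indices coincide. (One harmless imprecision: in case (b) the indices need not all be finite, since the functions $h_{l,s}$ can take the value $+\infty$, but this only strengthens the conclusion because such indices are positive anyway.)
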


A simple observation of the computations involved in the calculation of the images of the maps $h_{l,j}$ provides the following result. Having established that if stability indices are not $-\infty$ they will be positive, we next address the question of whether they are finite or infinite.

\begin{lemma}\label{A-infinity}
For a type $A$ cycle in a simple network in $\R^4$,
\begin{enumerate}
 \item[(i)] $t_j>0 \Rightarrow \sigma_j<+\infty$,
 \item[(ii)] if $t_k \in (0,e_k) \; \forall k \neq j$, then: $\sigma_j=+\infty \; \Leftrightarrow \; t_j<-c_j$.
\end{enumerate}
\end{lemma}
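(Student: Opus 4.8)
The plan is to read both statements directly off Theorem 4.1 in \cite{PodviginaAshwin2011}, after translating the eigenvalue hypotheses into the variables $a_l=c_l/e_l$ and $b_l=-t_l/e_l$. The single algebraic observation driving everything is that
$$a_l-b_l=\frac{c_l+t_l}{e_l},$$
so that, since $e_l>0$, the sign of $a_l-b_l$ is the sign of $c_l+t_l$; in particular $a_l-b_l<0 \iff t_l<-c_l$. The second ingredient is a structural fact about the maps $h_{l,s}$: evaluated at a finite positive argument, $h_{\tilde{j},s}$ equals $+\infty$ exactly when some node $l$ on the cyclic segment from $j$ to $s$ (including $j$, excluding $s$) satisfies $a_l-b_l<0$. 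First I would verify this by downward induction on $l$ starting from $s$: the base $h_{s,s}(y)=y$ is finite, and in both finite branches of the recursion $h_{l,s}$ is an affine function of $h_{l+1,s}$ with positive leading coefficient (as $a_l>0$ and $a_l-b_l>0$ there), so finiteness propagates up the chain until the first index with $a_l-b_l<0$, which forces the value to $+\infty$.

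For part (i), the hypothesis $t_j>0$ gives $b_j<0$, which immediately rules out case (a) of the cited theorem (that case requires $b_k>0$ for all $k$). If we are in case (c), then $\sigma_j=-\infty<+\infty$ and we are done; otherwise we are in case (b), where $j$ itself belongs to the index set $J_1,\ldots,J_L$ over which the minimum is taken. Taking $s=j$ in that minimum and using $h_{j,j}(y)=y$ yields $\sigma_j\leq h_{j,j}(-1/b_j)-1=-1/b_j-1$, which is finite. Hence $\sigma_j<+\infty$ in every case, with no assumption on $\rho$ needed.

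For part (ii), the standing hypothesis $t_k\in(0,e_k)$ for all $k\neq j$ means $b_k\in(-1,0)$ and $c_k+t_k>0$, so $a_k-b_k>0$ for every $k\neq j$; thus the only node that can have $a_l-b_l<0$ is $l=j$. Since case (c) yields $-\infty$ and case (a) is excluded (some $b_k<0$ for $k\neq j$), the question ``$\sigma_j=+\infty$?'' is decided inside case (b), where $\sigma_j=\min_s h_{\tilde{j},s}(-1/b_s)-1$ over the indices $s$ with $b_s<0$, a set containing every $k\neq j$. Because $j$ lies on the cyclic segment from $j$ to any such $s\neq j$, the structural fact above shows each term $h_{\tilde{j},s}(-1/b_s)$ equals $+\infty$ precisely when $a_j-b_j<0$ and is finite otherwise. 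Hence all terms are simultaneously $+\infty$ (giving $\sigma_j=+\infty$) exactly when $a_j-b_j<0$, i.e.\ when $t_j<-c_j$; and when $t_j>-c_j$ every term is finite, so $\sigma_j<+\infty$. This is the desired equivalence.

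The only delicate points I expect are bookkeeping rather than conceptual: correctly handling the cyclic relabelling $\tilde{j}=j$ or $j-m$ so that ``the segment from $j$ to $s$'' is unambiguous, and confirming in part (ii) that one may restrict attention to case (b) (equivalently $\rho>1$), since when $\rho<1$ all indices collapse to $-\infty$ and the finite-versus-infinite dichotomy is vacuous. Everything else reduces to the sign computation for $c_l+t_l$ and the monotone propagation of $+\infty$ through the recursion defining $h_{l,s}$.
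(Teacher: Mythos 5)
Your proof is correct and takes essentially the same route as the paper's: both read the statement directly off Theorem 4.1 of Podvigina--Ashwin \cite{PodviginaAshwin2011}, using the sign identity $a_l-b_l=(c_l+t_l)/e_l$ to exclude case (a), bound $\sigma_j$ by the $s=j$ term in case (b) for part (i), and reduce part (ii) to whether $a_j-b_j<0$ via the propagation of $+\infty$ through the recursion for $h_{l,s}$. Your write-up is in fact slightly more careful than the paper's: you prove the propagation fact by explicit induction (the paper calls it ``a simple observation''), you use the correct argument $-1/b_s$ in the minimum (the paper's proof has a typo, $-1/b_j$), and you flag the implicit restriction to $\rho>1$ that the paper also makes silently.
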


\begin{proof}
\begin{enumerate}
\item[(i)] Let $t_j>0$. Then $b_j<0$, which excludes case (a) of Theorem 4.1 in \cite{PodviginaAshwin2011}. In case (c) all indices are equal to $-\infty$, so we look at case (b), where
\begin{equation*}
\sigma_j \leq h_{j,j}\left(-\frac{1}{b_j} \right)-1<+\infty.
\end{equation*}
\item[(ii)] If $t_j<0$ is the only negative transverse eigenvalue, the inequality $0<t_k<e_k$ for all $k \neq j$ puts us in case (b) again, where now
\begin{equation*}
\sigma_j = \min\limits_{s \neq j} h_{\tilde{j},s}\left(-\frac{1}{b_j} \right)-1.
\end{equation*}
For all $h_{\tilde{j},s}$ to be equal to $+\infty$, we need $a_j-b_j<0$, so $c_j+t_j<0$.\qedhere
\end{enumerate}
\end{proof}

So understanding a network of $A$ cycles means distinguishing e.a.s.\ cycles from those with all stability indices equal to $-\infty$. 
Notice that if all connections in a cycle have stability indices equal to $-\infty$ then almost all initial conditions near this cycle will either approach another cycle in the network or leave any neighbourhood of the network.
For an e.a.s.\ cycle, it is of interest to determine which indices are finite and which are equal to $+\infty$. To simplify notation, we implicitly assume that the stability index is finite when writing $\sigma_j>0$.

Following the terminology of \cite{PodviginaAshwin2011} we use the notation
\begin{align*}
\rho=\prod_j \rho_j \quad \text{with} \quad \rho_j=\min(a_j, 1+b_j).
\end{align*}
Note that $\rho$ is different for each of the cycles and $\rho>1$ is necessary for the respective cycle to be anything other than completely unstable.

\subsection{The network $(A_2^+,A_2^+)$}

Denote by $[\xi_a \rightarrow \xi_b]$ the common connecting trajectory of the network. Since this connection is contained in a plane, the linearization at $\xi_a$ and $\xi_b$ may be written, respectively, as
$$
\left\{ \begin{array}{l}
\dot{x_1} = -r_ax_1 \\
\dot{x_2} = e_{a2}x_2 \\
\dot{x_3} = -c_{a3}x_3 \\
\dot{x_4} = -c_{a4}x_4 ,
\end{array} \right. 
\;\;\; \mbox{ and   } \; \; \; \; 
\left\{ \begin{array}{l}
\dot{x_1} = -r_bx_1 \\
\dot{x_2} = -c_{b2}x_2 \\
\dot{x_3} = e_{b3}x_3 \\
\dot{x_4} = e_{b4}x_4 ,
\end{array} \right. 
$$
where all constants are positive. Denote the two cycles by $X_3$ and $X_4$. Without loss of generality, assume that at $\xi_b$ the positive eigenvalues $e_{b3}$ and $e_{b4}$ take trajectories to $X_3$ and $X_4$, respectively. Then the transverse eigenvalues for $X_3$ are $-c_{a4}$ and $e_{b4}$, while those for $X_4$ are $-c_{a3}$ and $e_{b3}$. The assumption that $e_{b3} > e_{b4}$ implies that $X_4$ has all stability indices equal to $-\infty$. The network is the union of the cycles depicted in Figure \ref{B2B2}.

The necessary data for Theorem 4.1 in \cite{PodviginaAshwin2011} is as follows:
\begin{align*}
\mbox{at } \xi_a, \ a_a&=\frac{c_{a3}}{e_{a2}} >0 \quad \text{and \quad}  b_a=\frac{c_{a4}}{e_{a2}} >0,\\[5pt]
\mbox{at } \xi_b, \ a_b&=\frac{c_{b2}}{e_{b3}} >0 \quad \text{and \quad} b_b=-\frac{e_{b4}}{e_{b3}} \in (-1,0).
\end{align*}

\begin{proposition}\label{indicesA2A2}
If $\rho>1$, the c-indices for $X_3$ in the $(A_2^+, A_2^+)$ network are:
\begin{align*}
\sigma_{ab} >0, \qquad
\sigma_{ba} = \begin{cases}
			+\infty &\Leftrightarrow \quad c_{a3}<c_{a4}\\
			>0 &\Leftrightarrow \quad c_{a3}>c_{a4}
		\end{cases}.
\end{align*}
\end{proposition}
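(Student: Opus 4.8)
The plan is to apply Theorem 4.1 of \cite{PodviginaAshwin2011} directly, using the eigenvalue data already computed for the cycle $X_3$ at the two nodes $\xi_a$ and $\xi_b$. First I would identify which case of that theorem applies. Since $\rho>1$ is assumed and $X_3$ is the cycle that is \emph{not} automatically destabilised (recall $X_4$ has all indices $-\infty$ because $e_{b3}>e_{b4}$ forces $b_b<-1$ for $X_4$), I expect to land in case (b). To confirm this I would check the transverse eigenvalues for $X_3$: at $\xi_a$ we have $b_a=c_{a4}/e_{a2}>0$, and at $\xi_b$ we have $b_b=-e_{b4}/e_{b3}\in(-1,0)$. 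So exactly one of the two transverse eigenvalues is negative (the one at $\xi_b$), and it satisfies $b_b>-1$; this is precisely the hypothesis of case (b) with a single index $J_1=b$ where $b_j<0$.

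\textbf{Computing $\sigma_{ab}$.} With only $b_b<0$, the minimum in the case (b) formula is taken over the single value $s=b$, so $\sigma_j=h_{\tilde{j},b}(-1/b_b)-1$. For the index $\sigma_{ab}$ (the connection into $\xi_b$), I would use $\tilde{j}$ corresponding to $j=b$, giving $h_{b,b}(-1/b_b)-1=(-1/b_b)-1=e_{b3}/e_{b4}-1$. Since $e_{b3}>e_{b4}$ this is strictly positive and finite, yielding $\sigma_{ab}>0$. By Lemma \ref{A-infinity}(i), since the transverse eigenvalue at $\xi_b$ governing $\sigma_{ab}$ has $t_{b}=e_{b4}>0$, we independently know $\sigma_{ab}<+\infty$, consistent with this computation.

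\textbf{Computing $\sigma_{ba}$ — the dichotomy.} This is the interesting index and where the case split arises. For $\sigma_{ba}$ I would evaluate $h_{\tilde{a},b}(-1/b_b)-1$, which requires composing through the node $\xi_a$ via the recursion for $h_{l,j}$. The behaviour hinges entirely on the quantity $a_a-b_a=(c_{a3}-c_{a4})/e_{a2}$ at $\xi_a$: by the definition of $h_{l,j}$, if $a_a-b_a<0$ then $h_{\tilde{a},b}=+\infty$, whereas if $a_a-b_a>0$ the value is finite. The condition $a_a-b_a<0$ is exactly $c_{a3}<c_{a4}$, giving $\sigma_{ba}=+\infty$, and $a_a-b_a>0$ is exactly $c_{a3}>c_{a4}$, giving a finite (and, by the earlier Lemma, positive) value. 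This matches the claimed dichotomy. I would note that Lemma \ref{A-infinity}(ii) provides an independent cross-check: there the condition $\sigma_j=+\infty$ is characterised by $t_j<-c_j$, which in the present notation with the transverse eigenvalue at $\xi_a$ reads $-c_{a4}<-c_{a3}$, i.e. $c_{a3}<c_{a4}$, in agreement.

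\textbf{Main obstacle.} The only real care needed is the bookkeeping in the $h_{l,j}$ recursion: correctly identifying $\tilde{j}$ (whether to subtract $m=2$), tracking the index shift modulo $m$, and confirming that the positivity of the finite value in the $c_{a3}>c_{a4}$ branch follows from the Lemma that all finite $A$-cycle indices are non-negative (together with a generic strict inequality). The computation of $h_{b,b}$ is trivial since $h_{j,j}(y)=y$, so the substance is entirely in the one-step recursion through $\xi_a$ and correctly reading off the sign of $a_a-b_a$; once that is set up, both branches follow by direct substitution.
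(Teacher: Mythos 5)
Your proposal is correct and follows essentially the same route as the paper: both land in case (b) of Theorem 4.1 of \cite{PodviginaAshwin2011} using that the only negative transverse eigenvalue of $X_3$ is $-c_{a4}$ at $\xi_a$ while $e_{b4}\in(0,e_{b3})$ at $\xi_b$, so that the dichotomy for $\sigma_{ba}$ reduces to the sign of $a_a-b_a=(c_{a3}-c_{a4})/e_{a2}$ (this is exactly what Lemma \ref{A-infinity}(ii) packages, which the paper invokes directly and you use as a cross-check). Your explicit evaluation $\sigma_{ab}=h_{b,b}(-1/b_b)-1=e_{b3}/e_{b4}-1>0$ is slightly more detailed than the paper's remark that this index is positive because it cannot equal $-\infty$, but it is the same argument in substance.
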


\begin{proof}
There is only one negative transverse eigenvalue for $X_3$, which is $-c_{a4}$ at $\xi_a$. The transverse eigenvalue at $\xi_b$ is $e_{b4}$ and since we assume $e_{b3}>e_{b4}$, we are in case (ii) of Lemma \ref{A-infinity}, so $\sigma_{ba}=+\infty$ is decided by $c_{a3}\gtrless c_{a4}$. The remaining index is necessarily positive because it cannot be equal to $-\infty$.
\end{proof}

\subsection{The network $(A_3^-,A_3^-)$}

The network is geometrically as that in Figure \ref{B3B3}. We then have a cycle containing the nodes $\xi_1$, $\xi_2$ and $\xi_3$ with a common connection with another cycle with nodes  $\xi_1$, $\xi_2$ and $\xi_4$. We distinguish between these cycles by referring to the node they do not have in common. With this convention we talk about the $\xi_3$ cycle and the $\xi_4$ cycle.
We assume, as usual and without loss of generality, that $e_{23}>e_{24}$. According to Theorem \ref{minus_infinity}, at least one cycle has all c-indices equal to $-\infty$. Under the assumption that $e_{23}>e_{24}$, this is the $\xi_4$ cycle. In what follows, we calculate the expressions for the c-indices of the $\xi_3$ cycle, using Theorem 4.1 in Podvigina and Ashwin \cite{PodviginaAshwin2011}. 

The linearization at each node is the same as in  \cite{KS} (for a network of type $B$ cycles).

\begin{proposition}
Assume $\rho>1$. The $\xi_3$ cycle is e.a.s.\ and its c-stability indices are as follows:
\begin{align*}
\sigma_{12} &>0; \quad
\sigma_{31} = \begin{cases}
			+\infty &\Leftrightarrow \quad c_{14}>c_{13}\\
			>0 &\Leftrightarrow \quad c_{14}<c_{13}
		\end{cases};\\
\sigma_{23}&=\begin{cases}
			+\infty &\Leftrightarrow \quad (c_{14}>c_{13} \ \vee \ c_{32}<c_{34}) \ \wedge \ c_{34}>0\\
			>0 &\Leftrightarrow \quad (c_{14}<c_{13}  \ \wedge \ c_{32}>c_{34}) \ \vee \ c_{34}<0
		\end{cases}
\end{align*}
\end{proposition}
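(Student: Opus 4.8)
The plan is to compute the three c-indices $\sigma_{12}$, $\sigma_{31}$, $\sigma_{23}$ of the $\xi_3$ cycle directly from Theorem 4.1 in \cite{PodviginaAshwin2011}, relying on the auxiliary results already established (Lemma \ref{A-infinity} and the sign conventions for transverse eigenvalues). First I would identify, at each of the three nodes $\xi_1, \xi_2, \xi_3$ of the $\xi_3$ cycle, which eigenvalue plays the transverse role and determine its sign. At $\xi_2$, the two expanding directions are $e_{23}$ and $e_{24}$ with $e_{23}>e_{24}$ by assumption; since the $\xi_3$ cycle uses $e_{23}$ to expand, its transverse eigenvalue there is $e_{24}>0$, giving $b_2 = -e_{24}/e_{23} \in (-1,0)$. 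At $\xi_1$ the transverse eigenvalue is $-c_{14}$ (the contraction into the $\xi_4$ axis), and at $\xi_3$ it is $-c_{34}$, whose sign is not fixed a priori. Thus the $\xi_3$ cycle has at most two negative transverse eigenvalues, at $\xi_1$ and (possibly) at $\xi_3$, so we are in case (b) of Theorem 4.1 whenever $\rho>1$, which confirms e.a.s.\ via the Corollary.

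Next I would apply the formula $\sigma_j = \min_{s} h_{\tilde j,s}(-1/b_s) - 1$ over the indices $s$ with $b_s<0$. Since $b_2>0$ contributes no constraint, the minimum runs over $s\in\{1\}$ and, when $c_{34}>0$, also $s=3$. For $\sigma_{31}$ (the index entering $\xi_1$) the relevant single term is governed by $h$ evaluated at $-1/b_1$, and by the structure of $h_{l,j}$ the outcome $+\infty$ versus finite-positive is decided by the sign of $a_1 - b_1 = (c_{13}-c_{14})/e_{1k}$, i.e.\ by $c_{14}\gtrless c_{13}$; this is exactly the dichotomy of Lemma \ref{A-infinity}(ii) applied at $\xi_1$. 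For $\sigma_{23}$ one must take the minimum over both $s=1$ and $s=3$ (when $c_{34}>0$): the index is $+\infty$ precisely when \emph{every} term in the minimum is $+\infty$, which requires $a_1-b_1<0$ (i.e.\ $c_{14}>c_{13}$) to handle $s=1$ and $a_3-b_3<0$ (i.e.\ $c_{32}<c_{34}$) to handle $s=3$; this yields the stated disjunction $(c_{14}>c_{13}\ \vee\ c_{32}<c_{34})\ \wedge\ c_{34}>0$, while $c_{34}<0$ removes the $s=3$ constraint entirely and the $\xi_3$ node contributes no negative transverse eigenvalue.

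The index $\sigma_{12}$ is the easiest: at $\xi_1$ the transverse eigenvalue for the $\xi_3$ cycle is $-c_{14}<0$, so by Lemma \ref{A-infinity}(i) the index $\sigma_{12}$ is strictly less than $+\infty$, and since the cycle is not in case (c) it cannot be $-\infty$; hence $\sigma_{12}>0$ and finite.

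The main obstacle I anticipate is bookkeeping the composition of the $h_{l,j}$ maps correctly when the minimum involves two distinct values of $s$, and in particular verifying that the $+\infty$ outcome of the \emph{minimum} is governed by the conjunction (all terms infinite) rather than a single term — this is where the logical structure of the stated conditions for $\sigma_{23}$ comes from, and it requires carefully tracking the modular index shift $\tilde j = j$ vs.\ $j-m$ in the statement of Theorem 4.1 to ensure one composes the maps along the correct arc of the cycle from node $j$ around to each $s$. A secondary subtlety is confirming that the assumption $e_{23}>e_{24}$ (which assigned all $-\infty$ indices to the $\xi_4$ cycle in Theorem \ref{minus_infinity}) does not inadvertently force $b_2<-1$ for the $\xi_3$ cycle; here it gives $b_2\in(-1,0)$, keeping us safely in case (b).
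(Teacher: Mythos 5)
Your proposal has the right skeleton (Theorem 4.1(b) of \cite{PodviginaAshwin2011} plus Lemma \ref{A-infinity}, with a case split on the sign of $c_{34}$), but the core mechanics are inverted, and this is not a cosmetic slip. Since $b_j=-t_{jl}/e_{jk}$, the set $\{s : b_s<0\}$ over which the minimum in Theorem 4.1(b) runs consists of the nodes with \emph{positive} transverse eigenvalues, not negative ones. For the $\xi_3$ cycle this set is $\{2\}$ always (you computed $b_2=-e_{24}/e_{23}\in(-1,0)$ correctly at the start, yet later assert ``$b_2>0$ contributes no constraint''), together with $s=3$ exactly when $c_{34}<0$, not when $c_{34}>0$. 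Your minimum over $s\in\{1\}$, and over $s=3$ when $c_{34}>0$, is precisely backwards: at $\xi_1$ the transverse eigenvalue is $-c_{14}<0$, so $b_1=c_{14}/e_{12}>0$ and $s=1$ never enters the minimum; moreover $-1/b_1<0$ is not even a legitimate evaluation point. The paper evaluates $\sigma_{31}=h_{1,2}(-1/b_2)-1$; the condition $c_{14}\gtrless c_{13}$ indeed comes from the sign of $a_1-b_1$, but only because that sign decides whether $h_{1,2}$ itself is $+\infty$, not because $s=1$ indexes a term of the minimum. The same inversion infects your treatment of $\sigma_{12}$: Lemma \ref{A-infinity}(i) has hypothesis $t_j>0$ and must be applied at the node the trajectory flows \emph{into}, i.e.\ at $\xi_2$ with $t_2=e_{24}>0$; invoking it at $\xi_1$ with a negative eigenvalue gives nothing.

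The inversion becomes fatal at $\sigma_{23}$. By your own stated rule (the minimum is $+\infty$ iff \emph{every} term is $+\infty$), your identification of the terms would yield the conjunction $(c_{14}>c_{13})\wedge(c_{32}<c_{34})$, and the jump from there to ``this yields the stated disjunction'' is a non sequitur. In the correct computation, when $c_{34}>0$ there is only \emph{one} term, $s=2$, and $\sigma_{23}=h_{0,2}(-1/b_2)-1$; the disjunction $(c_{14}>c_{13})\vee(c_{32}<c_{34})$ arises from the recursive \emph{composition} defining $h_{0,2}$: it is $+\infty$ if $a_3-b_3<0$, and otherwise it is an increasing affine function of $h_{1,2}$, which is itself $+\infty$ if $a_1-b_1<0$ --- so an infinity at any link of the chain propagates outward. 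When $c_{34}<0$ the minimum genuinely has two terms ($s=2,3$), but then $h_{3,3}(-1/b_3)=-1/b_3$ is finite, forcing $\sigma_{23}$ finite; this is how the paper closes that case (via Lemma \ref{A-infinity}). So the case split you set up has the roles of the two signs of $c_{34}$ exchanged, and the logical structure ($\min$ versus composition) that actually produces the stated conditions is missing from your argument.
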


\begin{proof}
The required information at the nodes of the cycle is as follows. At $\xi_1$ we have
\begin{align*}
a_1&=\frac{c_{13}}{e_{12}} \quad \mbox{and} \quad b_1 = \frac{c_{14}}{e_{12}} >0.
\intertext{At $\xi_2$,}
a_2&=\frac{c_{21}}{e_{23}} \quad \mbox{and} \quad b_2 = -\frac{e_{24}}{e_{23}} \in (-1,0).
\intertext{At $\xi_3$,}
a_3&=\frac{c_{32}}{e_{31}} \quad \mbox{and} \quad b_3 = \frac{c_{34}}{e_{31}} .
\end{align*}
The proof has to proceed in two cases, according to the sign of $b_3$.

\paragraph{Case $b_3>0$:} This occurs when $c_{34}>0$. We then apply Theorem 4.1 (b) in \cite{PodviginaAshwin2011}. Along the connection $[\xi_3 \rightarrow \xi_1]$ we have
$$
\sigma_{31} = h_{1,2} \left(-\frac{1}{b_2} \right)-1.
$$
The value of $h_{1,2}$ depends on the sign of $a_1-b_1=\frac{c_{13}-c_{14}}{e_{12}}$. If $c_{13}-c_{14}<0$ then $h_{1,2}=+\infty$ and so is $\sigma_{31}$. Otherwise, $h_{1,2}$ is finite and $\sigma_{31}>0$.

For the c-index along $[\xi_1 \rightarrow \xi_2]$ we have
$$
\sigma_{12}=h_{2,2} \left(-\frac{1}{b_2} \right)-1=-\frac{1}{b_2}-1 = \frac{e_{23}}{e_{24}}-1 >0.
$$

Finally,
$$
\sigma_{23}=h_{3-3,2} \left(-\frac{1}{b_2} \right)-1=h_{0,2} \left(-\frac{1}{b_2} \right)-1.
$$

This is equal to $+\infty$ if $a_3-b_3<0$, which is the same as $c_{32}<c_{34}$. Otherwise $h_{0,2}$ is a function of $h_{1,2}$, giving the same conditions as for $\sigma_{31}$.
\paragraph{Case $b_3<0$:} This occurs when $c_{34}<0$. We then apply Theorem 4.1 (b) in \cite{PodviginaAshwin2011} but now have to account for two negative $b_j$'s. In this case, $\sigma_{ij}$ is the minimum of two images of $h_{l,j}$. Using Lemma \ref{A-infinity}, we know that all c-indices are positive and finite, with the possible exception of $\sigma_{ij}$ where the transverse eigenvalue at $\xi_j$ is negative. This is possible only for $\sigma_{31}$, so Lemma \ref{A-infinity} yields $\sigma_{31}=+\infty \Leftrightarrow c_{13}<c_{14}$.
\end{proof}

This is very different from the geometrically identical $(B_3^-,B_3^-)$ network for which various stability configurations (e.a.s.\ and non-e.a.s.) may occur, see section 5 in \cite{CastroLohse}.

\subsection{The network $(A_3^-,A_3^-,A_4^-)$}

Consider a network consisting of an $A_4^-$ cycle $[\xi_1 \to \xi_2 \to \xi_3 \to \xi_4 \to \xi_1]$ and two $A_3^-$ cycles, $[\xi_1 \to \xi_2 \to \xi_3 \to \xi_1]$ and $[\xi_1 \to \xi_2 \to \xi_4 \to \xi_1]$, which we call $\xi_3$ and $\xi_4$ cycle, respectively. Note that the geometry is not symmetric with respect to the $A_3^-$ cycles. This is the only way to build such a network, because there are only four eigendirections at each equilibrium.

We determine when each of the cycles is e.a.s.\ -- for ease of reference we have listed the quantities $a_j$ and $b_j$ in Table \ref{A3A3A4}. We use $\tilde{a}_j,\tilde{b}_j, \tilde{\sigma}_{ij}, \ldots$ for the $\xi_3$ cycle, plain letters for the $\xi_4$ cycle and $\bar{a}_j, \bar{b}_j, \bar{\sigma}_{ij}, \ldots$ for the $A_4^-$ cycle.

\linespread{1.6}
\begin{table}
\begin{center}
\begin{tabular}{|c|c|c|c|}
\hline 
\mbox{} & $\xi_3$ cycle & $\xi_4$ cycle & $A_4^-$ cycle \\
\hline
\hline
$\xi_1$ & $\tilde{a}_1 = \dfrac{c_{13}}{e_{12}}$; $\tilde{b}_1 = \dfrac{c_{14}}{e_{12}}$ & $a_1 = \dfrac{c_{14}}{e_{12}}$; $b_1 = \dfrac{c_{13}}{e_{12}}$ & $\bar{a}_1=\dfrac{c_{14}}{e_{12}}$; $\bar{b}_1=\dfrac{c_{13}}{e_{12}}$ \\[7pt]
\hline
$\xi_2$ & $\tilde{a}_2 = \dfrac{c_{21}}{e_{23}}$; $\tilde{b}_2 = -\dfrac{e_{24}}{e_{23}}$ & $a_2 = \dfrac{c_{21}}{e_{24}}$; $b_2 = -\dfrac{e_{23}}{e_{24}}$ & $\bar{a}_2=\dfrac{c_{21}}{e_{23}}$; $\bar{b}_2=-\dfrac{e_{24}}{e_{23}}$   \\[7pt]
\hline
$\xi_3$ & $\tilde{a}_3 = \dfrac{c_{32}}{e_{31}}$; $\tilde{b}_3 = -\dfrac{e_{34}}{e_{31}}$ & \mbox{}  & $\bar{a}_3=\dfrac{c_{32}}{e_{34}}$; $\bar{b}_3=-\dfrac{e_{31}}{e_{34}}$ \\[7pt]
\hline
$\xi_4$ & \mbox{}  & $a_4 = \dfrac{c_{42}}{e_{41}}$; $b_4 = \dfrac{c_{43}}{e_{41}}$   & $\bar{a}_4=\dfrac{c_{43}}{e_{41}}$; $\bar{b}_4=\dfrac{c_{42}}{e_{41}}$ \\[7pt]
\hline
\end{tabular}
\end{center}
\caption{The quantities $a_j$ and $b_j$ for cycles in the $(A_3^-, A_3^-, A_4^-)$ network. \label{A3A3A4}}
\end{table}
\linespread{1}

\begin{lemma}\label{A3A3A4-lem}
In the $(A_3^-,A_3^-,A_4^-)$ network
\begin{itemize}
\item[(a)] the $\xi_3$ cycle is e.a.s.\ if and only if $\tilde{\rho}>1$ and $e_{24}<e_{23}$ and $e_{34}<e_{31}$. If this is the case, then its stability indices are
	\begin{align*}
	\tilde{\sigma}_{31}&=\begin{cases}
			+\infty &\Leftrightarrow \quad c_{14}>c_{13}\\
			>0 &\Leftrightarrow \quad c_{14}<c_{13}
		\end{cases}\\
	\tilde{\sigma}_{12},\tilde{\sigma}_{23}&>0.
	\end{align*}
\item[(b)] the $\xi_4$ cycle is e.a.s.\ if and only if $\bar{\rho}>1$ and $e_{24}>e_{23}$. If this is the case, then its stability indices are
	\begin{align*}
	\sigma_{41},\sigma_{24}&=\begin{cases}
			+\infty  &\Leftrightarrow \quad c_{13}>c_{14}\\
			>0  &\Leftrightarrow \quad c_{13}<c_{14}
		\end{cases}\\
	\sigma_{12}&>0.
	\end{align*}
\item[(c)] the $A_4^-$ cycle is e.a.s.\ if and only if $\rho>1$ and $e_{24}<e_{23}$ and $e_{34}>e_{31}$. If this is the case, then its stability indices are
	\begin{align*}
	\bar{\sigma}_{41}&=\begin{cases}
			+\infty  &\Leftrightarrow \quad c_{13}>c_{14}\\
			>0  &\Leftrightarrow \quad c_{13}<c_{14}
		\end{cases}\\
	\bar{\sigma}_{34}&=\begin{cases}
			+\infty  &\Leftrightarrow \quad c_{13}>c_{14} \; \vee \; c_{42}>c_{43}\\
			>0  &\Leftrightarrow \quad c_{13}<c_{14} \; \wedge \; c_{42}<c_{43}
	\end{cases}\\
	\bar{\sigma}_{12}, \bar{\sigma}_{23}&>0.
	\end{align*}
\end{itemize}
\end{lemma}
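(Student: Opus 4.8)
The plan is to treat the three cycles by one common scheme: read the quantities $a_j,b_j$ off Table \ref{A3A3A4}, record for each cycle the sign and magnitude of the transverse eigenvalue at every node, and then extract the indices from Theorem 4.1 in \cite{PodviginaAshwin2011}, invoking Lemma \ref{A-infinity} wherever it applies. The structural fact that organises everything is that the three cycles diverge only at the two branch nodes $\xi_2$ (separating the $\xi_4$ cycle from the pair $\{\xi_3\text{-cycle},\,A_4^-\text{-cycle}\}$) and $\xi_3$ (separating the $\xi_3$ cycle from the $A_4^-$ cycle); at every other shared node the transverse direction is contracting. This is exactly why the eigenvalue inequalities in the statement involve only $e_{23}$ against $e_{24}$ and $e_{31}$ against $e_{34}$.

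For the e.a.s.\ characterisations I would argue as follows. At a branch node the cycle that leaves along the \emph{weaker} expanding eigenvalue has transverse eigenvalue equal to the stronger expanding one, so its $b$-value is $<-1$; by case (c) of Theorem 4.1 in \cite{PodviginaAshwin2011}, equivalently by Theorem \ref{minus_infinity}, that cycle has all c-indices $-\infty$ and cannot be e.a.s. Hence the $\xi_3$ cycle needs $e_{24}<e_{23}$ (to survive $\xi_2$) and $e_{34}<e_{31}$ (to survive $\xi_3$); the $\xi_4$ cycle, which competes only at $\xi_2$, needs $e_{24}>e_{23}$; and the $A_4^-$ cycle needs $e_{24}<e_{23}$ and $e_{34}>e_{31}$. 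Together with $\rho>1$ (necessary, since $\rho<1$ again forces case (c)) these conditions place the respective cycle into case (b), where all transverse eigenvalues satisfy $b_j>-1$; the Corollary that an $A$ cycle with some index $>-\infty$ is e.a.s.\ (or directly Theorem \ref{local_index}) then yields essential asymptotic stability. Reversing the implications gives the stated ``if and only if''.

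It remains to evaluate the indices in the e.a.s.\ regime. For the $\xi_3$ cycle the only contracting transverse eigenvalue sits at $\xi_1$, while the transverse eigenvalues $e_{24},e_{34}$ at $\xi_2,\xi_3$ are expanding and, by the e.a.s.\ hypotheses, weaker than the respective expanding eigenvalues; thus Lemma \ref{A-infinity} applies verbatim, giving $\tilde\sigma_{12},\tilde\sigma_{23}$ finite and positive by part (i) and $\tilde\sigma_{31}=+\infty$ exactly when $t_1<-c_1$, i.e.\ $c_{14}>c_{13}$, by part (ii). For the $\xi_4$ and $A_4^-$ cycles Lemma \ref{A-infinity}(ii) no longer applies directly (there is more than one contracting transverse eigenvalue), so I would run the $h_{l,j}$-recursion of \cite[p.\ 900]{PodviginaAshwin2011} explicitly, using $\sigma_j=\min_s h_{\tilde j,s}(-1/b_s)-1$ and the fact that the recursion returns $+\infty$ precisely when it meets a node with $a_l-b_l<0$, i.e.\ $c_l+t_l<0$. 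Tracking the chain node by node (and taking the minimum over the negative-$b$ indices $s$ for the $A_4^-$ cycle) converts each such sign condition into the contracting-eigenvalue inequalities $c_{13}\gtrless c_{14}$ and $c_{42}\gtrless c_{43}$ recorded in (b) and (c); the index $\sigma_{12}$ leading to the branch node $\xi_2$ is always finite and positive, equal to $-1/b_2-1$ for the $\xi_4$ cycle and a minimum bounded above by this value for the $A_4^-$ cycle.

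The main obstacle is the bookkeeping of this recursion rather than any single hard idea: one must correctly assign to each of the three cycles the role -- contracting, expanding or transverse -- of every eigendirection at the \emph{shared} nodes, handle the modular index shift $\tilde j=j-m$, and verify that the $+\infty$-conditions contributed by different nodes along a chain combine, through the disjunctions in the recursion and the minimum over $s$, into exactly the stated inequalities. The delicate point is that a single node with $a_l-b_l<0$ anywhere along a chain already forces $h_{l,j}=+\infty$, so for the longer $A_4^-$ cycle one must check every intermediate node -- for instance that $\bar a_2-\bar b_2=(c_{21}+e_{24})/e_{23}>0$ always, so $\xi_2$ never contributes an extra $+\infty$ -- to be certain that the final condition is the clean disjunction claimed.
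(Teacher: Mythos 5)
Your strategy coincides with the paper's: take $a_j,b_j$ from Table \ref{A3A3A4}, obtain the e.a.s.\ characterisations from the requirement $b_j>-1$ (together with $\rho>1$) in Theorem 4.1 of \cite{PodviginaAshwin2011}, dispatch the $\xi_3$ cycle with Lemma \ref{A-infinity}, and evaluate the remaining indices through the $h_{l,j}$-recursion. Part (a) and the three e.a.s.\ equivalences are handled correctly (your branch-node argument is a sound expansion of the paper's one-line justification), and your treatment of part (c), including the check that $\bar a_2-\bar b_2>0$ always, matches the paper's.

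The gap is in part (b), and it sits exactly at the ``delicate point'' you identify but then verify only for the $A_4^-$ cycle. For the $\xi_4$ cycle $[\xi_1\to\xi_2\to\xi_4\to\xi_1]$ one has $m=3$, the single escape node is $s=2$ (at $\xi_2$), and the index along $[\xi_2\to\xi_4]$ is $\sigma_{24}=h_{0,2}(-1/b_2)-1$, because $j=3>s$ forces $\tilde j=3-m=0$. Since the indices in the recursion are taken modulo $m$, the head of the chain $h_{0,2}$ is node $0\equiv 3$, i.e.\ $\xi_4$ itself, and its first step is governed by $a_0-b_0=(c_{42}-c_{43})/e_{41}$. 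Nothing in the setup fixes the sign of $c_{42}-c_{43}$ (part (c) of the statement explicitly allows both orders), so a correct execution of your plan yields $\sigma_{24}=+\infty$ if and only if $c_{13}>c_{14}$ or $c_{43}>c_{42}$ --- an extra disjunct, meaning $\sigma_{24}$ and $\sigma_{41}$ do \emph{not} obey the same dichotomy, contrary to your assertion that the tracking ``converts each such sign condition into the \ldots inequalities recorded in (b)'' and contrary to statement (b) itself. Compare the paper's own $(A_3^-,A_3^-)$ computation, where this wrap-around is done correctly and precisely such a disjunct ($c_{32}<c_{34}$) appears in $\sigma_{23}$. You are in good company: the published proof of (b) makes the same slip, testing $a_2-b_2=(c_{21}+e_{23})/e_{24}>0$ for $h_{0,2}$ --- a quantity that never enters that function, since $h_{0,2}$ depends only on $a_0,b_0,a_1,b_1$ and the argument $-1/b_2$ --- instead of $a_0-b_0$. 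But because your proposal defers all of (b) to ``tracking the chain node by node,'' and that tracking, carried out correctly, contradicts the claimed conclusion, the proposal does not prove the statement as printed; it reproduces the paper's error at this step rather than detecting it.
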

\begin{proof}
In all cases, the conditions for e.a.s.\ are straightforward applications of Theorem 4.1 in \cite{PodviginaAshwin2011}: they are necessary and sufficient to have $\tilde{b}_j,b_j,\bar{b}_j>-1$ for all $j$ in the respective cycle. So we calculate the indices for each case, making use of Table \ref{A3A3A4}.

In case (a) we look at the $\xi_3$ cycle. Its only negative transverse eigenvalue is $-c_{14}$ at $\xi_1$. So Lemma \ref{A-infinity} immediately yields $\tilde{\sigma}_{12}, \tilde{\sigma}_{23}>0$, and $\tilde{\sigma}_{31}=+\infty$ if and only if $c_{14}>c_{13}$.

In case (b) the $\xi_4$ cycle has two negative transverse eigenvalues, thus $b_2<0$ is the only negative $b_j$. From Theorem 4.1 in \cite{PodviginaAshwin2011} we get
\begin{align*}
\sigma_{41}&=h_{1,2}\left(-\frac{1}{b_2}\right)-1,\\
\sigma_{12}&=h_{2,2}\left(-\frac{1}{b_2}\right)-1=\frac{e_{24}}{e_{23}}-1>0,\\
\sigma_{24}&=h_{0,2}\left(-\frac{1}{b_2}\right)-1.
\end{align*}
Now $h_{1,2}(.)$ is equal to $+\infty$ if $a_1-b_1=\frac{c_{14}-c_{13}}{e_{12}}<0$ . Otherwise it is a finite expression involving $h_{2,2}(.)$, which is finite. For $h_{0,2}(.)$ we have $a_2-b_2=\frac{c_{21}+e_{23}}{e_{24}}>0$, so it is a finite expression involving $h_{1,2}(.)$ and thus the reasoning from above applies.

For statement (c) and the $C_4^-$ cycle we have $\bar{b}_2, \bar{b}_3<0$ and thus
\begin{align*}
\bar{\sigma}_{12}&\leq h_{1,2}\left(-\frac{1}{\bar b_2}\right)-1=\frac{e_{23}}{e_{24}}-1<+\infty,\\
\bar{\sigma}_{23}&\leq h_{3,3}\left(-\frac{1}{\bar b_3}\right)-1=\frac{e_{34}}{e_{31}}-1<+\infty,\\[5pt]
\bar{\sigma}_{34}&=\min\left(h_{0,2}(-1/\bar b_2), h_{0,3}(-1/\bar b_3) \right)-1,\\
\bar{\sigma}_{41}&=\min\left(h_{1,2}(-1/\bar b_2), h_{1,3}(-1/\bar b_3) \right)-1.
\end{align*}
Conditions for the latter two indices to be finite follow in the same way as before.
\end{proof}

No two cycles can be e.a.s.\ simultaneously. This is clear even without lemma \ref{A3A3A4-lem} because they all share the trajectory $[\xi_1 \to \xi_2]$. We add to this

\begin{corollary}
As long as $\rho>1$ for all cycles, the $(A_3^-,A_3^-,A_4^-)$ network is not completely unstable.
\end{corollary}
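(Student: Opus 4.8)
The plan is to read ``not completely unstable'' as the statement that at least one cycle in the network attracts a set of points of positive local measure, and to establish this by showing that, under the hypothesis $\rho>1$ for every cycle, at least one of the three cycles is e.a.s. By the corollary following Lemma \ref{A-infinity}, an $A$ cycle with $\sigma_j>-\infty$ for some $j$ is generically e.a.s., so producing a single e.a.s.\ cycle immediately rules out the situation where all cycles have all stability indices equal to $-\infty$ (which is precisely when almost every nearby initial condition leaves any neighbourhood of the network).

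The entire argument then reduces to the three e.a.s.\ criteria already extracted in Lemma \ref{A3A3A4-lem}. Discarding the clauses $\tilde\rho>1$, $\bar\rho>1$, $\rho>1$, which hold by assumption, these criteria read: the $\xi_3$ cycle is e.a.s.\ iff $e_{24}<e_{23}$ and $e_{34}<e_{31}$; the $\xi_4$ cycle is e.a.s.\ iff $e_{24}>e_{23}$; and the $A_4^-$ cycle is e.a.s.\ iff $e_{24}<e_{23}$ and $e_{34}>e_{31}$. I would organise the proof as a short case distinction on the two eigenvalue comparisons $e_{24}\lessgtr e_{23}$ at $\xi_2$ and $e_{34}\lessgtr e_{31}$ at $\xi_3$. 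If $e_{24}>e_{23}$, the $\xi_4$ cycle is e.a.s. If instead $e_{24}<e_{23}$, then comparing $e_{34}$ with $e_{31}$: when $e_{34}<e_{31}$ the $\xi_3$ cycle is e.a.s., and when $e_{34}>e_{31}$ the $A_4^-$ cycle is e.a.s. These three cases are mutually exhaustive, so exactly one cycle is always e.a.s.

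The point worth flagging is not a computational obstacle but the exhaustiveness of the case split: it requires the comparisons $e_{24}\neq e_{23}$ and $e_{34}\neq e_{31}$ to be strict. This is exactly guaranteed by the no-double-eigenvalue convention adopted for simple cycles, since $e_{24},e_{23}$ are distinct eigenvalues at $\xi_2$ and $e_{34},e_{31}$ are distinct eigenvalues at $\xi_3$. Hence the degenerate boundaries are excluded generically, the three cases cover all admissible configurations, and the corollary follows at once from Lemma \ref{A3A3A4-lem}.
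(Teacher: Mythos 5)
Your proposal is correct and is essentially the argument the paper intends: the corollary is stated as an immediate consequence of Lemma \ref{A3A3A4-lem}, and your exhaustive case split on $e_{24}\lessgtr e_{23}$ and $e_{34}\lessgtr e_{31}$ (with strictness guaranteed by the no-double-eigenvalue convention for simple cycles) is exactly the implicit reasoning, showing that under $\rho>1$ for all cycles precisely one of the three cycles is e.a.s.\ and hence the network attracts a positive-measure set of nearby points.
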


\paragraph{The network $(A_3^-,A_4^-)$:}

This network may be thought of as obtained from the previous one by deletion of one of the $A_3^-$ cycles. If we keep the assumptions on the signs and magnitude of the transverse eigenvalues, the c-indices are the same as above. In this case, however, it is possible to make the network potentially more stable by choosing transverse eigenvalues to be negative whenever possible. Assume we delete the $\xi_4$ cycle, i.e.\ the connection $[\xi_2 \rightarrow \xi_4]$, to obtain the network $(A_3^-,A_4^-)$. We may now choose $t_2=e_{24}<0$.

Both remaining cycles are affected in the same way. For the $\xi_3$ cycle we may now have $\tilde{a}_2-\tilde{b}_2<0$ (if and only if $c_{21}<-e_{24}$), then $\tilde{\sigma}_{12}=+\infty$. Similarly, for the $A_4^-$ cycle we get $\bar{\sigma}_{12}=+\infty$ if and only if $c_{21}>-e_{24}$.

\section{Concluding remarks}\label{conclusion}
We have contributed to the study of a certain type of simple heteroclinic networks in $\R^4$, not just by providing a complete list of such simple networks, but by studying the stability of cycles in networks of type $A$. In the heteroclinic context, type $A$ networks have been overlooked. We show that even though their geometry resembles that of networks of other types, the stability of $A$ cycles can be very different.

The present list of simple heteroclinic networks provides convenient examples for extending research on heteroclinic dynamics to questions arising from bifurcation, noise addition and so on. This is beyond the scope of the present paper.

\paragraph{Acknowledgements:}

The authors are grateful to Peter Ashwin and Olga Podvigina for fruitful conversations.

The first author was partially supported by CMUP (UID/MAT/00144/2013),
which is funded by FCT (Portugal) with national (MEC) and European
structural funds through the programs FEDER, under the partnership
agreement PT2020.

The second author gratefully acknowledges support through the Nachwuchsfonds of the Mathematics Department in Hamburg and grant ``Incentivo/MAT/UI0144/2014'' of the FCT through CMUP, Portugal.

Some of this research took place during a visit of the first author to the Mathematics Department of the University of Hamburg whose hospitality is gratefully acknowledged.

\subsection*{A.1 \hspace{.2cm} The $(A_2^+,A_2^+)$ network}

From the proof of Theorem \ref{construct} we know that $\Gamma=\langle \kappa_{12}, \kappa_{13} \rangle \cong \Z_2^2$ supports $A_2^+$ cycles. Since $\kappa_{14}=\kappa_{12} \circ \kappa_{13}$, there are three planar fixed-point subspaces
\begin{eqnarray*}
P_{12} & = & \{(x_1,x_2,0,0)\} = \mbox{Fix}\langle \kappa_{12} \rangle \\
P_{13} & = & \{(x_1,0,x_3,0)\} = \mbox{Fix}\langle \kappa_{13} \rangle \\
P_{14} & = & \{(x_1,0,0,x_4)\} = \mbox{Fix}\langle \kappa_{14} \rangle,
\end{eqnarray*}
where $\langle \kappa_{1k} \rangle \simeq \Z_2$. The only fixed-point axis is
\begin{equation*}
L_1=\{(x_1,0,0,0)\} = \mbox{Fix} \langle \kappa_{12},\kappa_{13} \rangle = \Fix(\Gamma).
\end{equation*}
We can produce a heteroclinic cycle of type $A_2^+$ in a similar way to that of the construction of the $B_2^+$ cycle in \cite{CastroLohse}. A $\Gamma$-equivariant vector field is
\begin{eqnarray*}
\dot{x_1}& = & a_1x_1 + \sum_{i=1}^{4} b_{1i} x_i^2 + c_1x_1^3 \\
\dot{x_2}& = & a_2x_2 + \left(\sum_{i=1}^{4} b_{2i} x_i^2\right)x_2 + c_2x_2^2x_3x_4 \\
\dot{x_3}& = & a_3x_3 + \left(\sum_{i=1}^{4} b_{3i} x_i^2\right)x_3 + c_3x_2x_3^2x_4 \\
\dot{x_4}& = & a_4x_4 + \left(\sum_{i=1}^{4} b_{4i} x_i^2\right)x_4 + c_4x_2x_3x_4^2.
\end{eqnarray*}
The coefficients $b_{ii}$ need to be different from the rest but the remaining three may be equal.\footnote{This vector field is not equivariant for $\Gamma = \langle \kappa_2,\kappa_3,\kappa_4 \rangle$ used for the $B_2^+$ cycle (see \cite{CastroLohse}, Appendix A). However, when restricted to the planes $P_{1k}$ the two vector fields do coincide. The information at linear level for the dynamics coincides with that for the $B_2^+$ cycle.} 

Choose $a_i>0$ so that the origin is unstable. We can then choose $b_{11}^2-4a_1c_1>0$ as well as $b_{11}<0$ and $c_1<0$ to ensure the existence of only two distinct equilibria on the $x_1$-axis. The remaining coefficients may be chosen to provide the necessary saddle-sink connections.

Therefore, this normal form supports an $A_2^+$ cycle and two of these can be put together in an $(A_2^+,A_2^+)$ network. 

\subsection*{A.2 \hspace{.2cm} The $(A_3^-,A_3^-)$ and $(A_3^-,A_3^-,A_4^-)$ networks}

For the construction of a cycle of type $A_3^-$ we look at
$$
\Gamma = \langle \kappa_{12},\kappa_{13},\kappa_{34} \rangle,
$$
which contains all $\kappa_{ij}$ and $-\Id \in \Gamma$, so that the same symmetry supports an $A_4^-$ cycle.
A $\Gamma$-equivariant vector field for this symmetry is
$$
\dot{x_j} = a_jx_j + \left(\sum_{i=1}^{4} b_{ji} x_i^2\right)x_j + c_jx_1x_2x_3x_4 x_j.
$$
When restricted to a plane, say $P_{12}$, we obtain
\begin{eqnarray*}
\dot{x_1}& = & a_1x_1 + \left(\sum_{i=1}^{2} b_{1i} x_i^2\right)x_1\\
\dot{x_2}& = & a_2x_2 + \left(\sum_{i=1}^{2} b_{2i} x_i^2\right)x_2.
\end{eqnarray*}
Note that this does not support a heteroclinic cycle of type $A_2^+$ as the two equilibria on the $x_1$-axis would be related by symmetry:
$$
a_1x_1+b_{11}x_1^3=0 \qquad \Leftrightarrow \qquad x_1=0 \quad \mbox{or} \quad x_1= \pm \sqrt{-a_1/b_{11}},
$$
where $\Gamma. (\sqrt{-a_1/b_{11}},0,0,0) = \{(\sqrt{-a_1/b_{11}},0,0,0) , (-\sqrt{-a_1/b_{11}},0,0,0) \}$ is the corresponding group orbit.

Coefficients can be chosen to ensure a saddle-sink connection in $P_{12}$ in a standard way. Two cycles of type $A_3^-$ can be put together in a network in a way analogous to that used for the $B_3^-$ cycle. Again the information at linear level required for the dynamics is the same as that obtained for the $(B_3^-,B_3^-)$ network.

\end{document}